\DeclareMathOperator\ann{Ann}
\newtheorem{lemma}{Lemma}[section]
\newtheorem{thm}[lemma]{Theorem}
\newtheorem{prop}[lemma]{Proposition}
\newtheorem{cor}[lemma]{Corollary}
\newtheorem*{prop*}{Proposition}
\newtheorem{prop_intro}{Proposition}
\newtheorem{conj_intro}[prop_intro]{Conjecture}
\newtheorem{quest_intro}[prop_intro]{Question}
\newtheorem{thm_intro}[prop_intro]{Theorem}
\newtheorem{cor_intro}[prop_intro]{Corollary}
\theoremstyle{definition}
\newtheorem{defn_intro}[prop_intro]{Definition}
\newtheorem{defn}[lemma]{Definition}
\newtheorem{rem}[lemma]{Remark}
\theoremstyle{definition}
\newtheoremstyle{citing}
  {3pt}
  {3pt}
  {\itshape}
  {}
  {\bfseries}
  {}
  {.5em}
  {\thmnote{#3}}
\theoremstyle{citing}
\newtheorem*{varthm}{}
\definecolor{darkgreen}{cmyk}{1,0,1,.2}
\newcommand{\ha}{\ensuremath {H_2^{\rm am}}}
\newcommand{\N}{\ensuremath {\mathbb{N}}}
\newcommand{\R} {\ensuremath {\mathbb{R}}}
\newcommand{\Z} {\ensuremath {\mathbb{Z}}}
\newcommand{\G} {\ensuremath {\Gamma}}
\newcommand{\calH} {\ensuremath {\mathcal{H}}}
\renewcommand{\phi}{\varphi}
\begin{document}

\title[]{Central extensions and bounded cohomology}

\author[]{Roberto Frigerio}
\address{Dipartimento di Matematica, Universit\`a di Pisa, Italy}
\email{roberto.frigerio@unipi.it}

\author[Alessandro Sisto]{Alessandro Sisto}
	\address{Department of Mathematics, Heriot-Watt University, Edinburgh, UK}
	\email{a.sisto@hw.ac.uk}


\date{\today}

\keywords{}
\begin{abstract}
It was shown by Gersten that a central extension of a finitely generated group is quasi-isometrically trivial provided that its Euler class is bounded. 
We say that a finitely generated group $G$ satisfies Property QITB (quasi-isometrically trivial implies bounded) if the Euler class of any quasi-isometrically trivial
central extension of $G$ is bounded. We exhibit a finitely generated group $G$ which does not satisfy Property QITB. 
This answers a question by Neumann and Reeves, and provides partial answers to related questions by Wienhard and Blank. We also prove that Property QITB holds
for a large class of groups, including amenable groups,  right-angled Artin groups,  relatively hyperbolic groups with amenable peripheral subgroups, and 3-manifold groups.

Finally, we show that 
Property QITB holds for every finitely presented group if a conjecture by Gromov on bounded primitives of differential forms holds as well.
\end{abstract}

\maketitle

\section{Introduction}
Let
$$
\xymatrix{
1\ar[r] & Z\ar[r]^{i} & E \ar[r]^{\pi}&  G \ar[r] & 1 
}
$$
be a central extension of groups, where $G$ and $Z$ (hence, $E$) are finitely generated. Any such extension defines a cohomology class
$\omega\in H^2(G,Z)$, which will be called the \emph{Euler class} of the extension. It is well known that 
the Euler class completely determines the isomorphism class of a central extension (see Section~\ref{prelim:sec}), 
and it is natural to investigate which geometric features it encodes.

We say that a class $\omega\in H^2(G,Z)$
is \emph{bounded} if it lies in the image of the comparison map $H^2_b(G,Z)\to H^2(G,Z)$, i.e.~if it can be described by a bounded cocycle
(see Section~\ref{prelim:sec} for the precise definition).
Following~\cite{gersten:preprint, Ger:last, klelee}, we say that the extension of finitely generated groups
$$
\xymatrix{
1\ar[r] & Z\ar[r]^{i} & E \ar[r]^{\pi}&  G \ar[r] & 1 
}
$$
 is \emph{quasi-isometrically trivial} if there exists a quasi-isometry
$$
f\colon E\longrightarrow {Z}\times G
$$
such that the following diagram commutes, up to bounded error:
$$
\xymatrix{ E\ar[r]^{\pi}\ar[d]^{f} &G\ar[d]^{\rm Id}\\
Z\times G\ar[r]^{\pi_2} & G\ .}
$$
Here $\pi_2\colon Z\times G\to G$ is the projection on the second factor, and, for clarity, the diagram commuting up to bounded error means that given a word metric $d_G$ on $G$ we have $\sup_{h\in E} d_G(\pi(h),\pi_2(f(h)))<+\infty$.

It was first shown by Gersten~\cite{Ger:last, gersten:preprint} that a central extension is quasi-isometrically trivial provided that its Euler class
is bounded. In this paper we address the following:

\begin{quest_intro}\label{fund:quest}
Is the Euler class of a quasi-isometrically trivial extension necessarily bounded?
\end{quest_intro}

Question~\ref{fund:quest} was first asked by Neumann and Reeves in~\cite{NeuRee0, NeuRee}
(see also \cite[Remark 2.6]{whyte}).
Moreover, it turns out to be equivalent to questions on $\ell^\infty$-cohomology posed in~\cite{Wie, blank:thesis} (see Question~\ref{linftyquest} and Proposition~\ref{equivalenceform}),
and related to a Conjecture by Gromov (see Conjecture~\ref{Gromov_conj} and Corollary~\ref{conj_cor}).
 We provide here a negative answer to Question~\ref{fund:quest}:

\begin{thm_intro}\label{counterexample:thm}
There exists a quasi-isometrically trivial central extension of a finitely generated group $G$ by $\Z$ whose Euler class is not bounded.\footnote{Since the first version of this paper appeared on ArXiv, a finitely presented example of a group with the same property has been found \cite{AM:no_Gromov}.}
\end{thm_intro}

\begin{defn_intro}
Let $G$ be a finitely generated group. Then $G$ satisfies Property QITB (``quasi-isometrically trivial $\Rightarrow$ bounded'') if the following condition holds:
for every finitely generated abelian group $Z$, the Euler class of any  quasi-isometrically trivial central extension of $G$ by $Z$ is bounded.
\end{defn_intro}

Theorem~\ref{counterexample:thm} states that there exists a finitely generated group $G$ which does not satisfy Property QITB.
Nevertheless, we show that Property QITB holds for large families of groups:

\begin{thm_intro}\label{families}
Suppose the finitely generated group $G$ belongs to one of the following families:
\begin{enumerate}
\item amenable groups;
\item relatively hyperbolic groups with respect to a finite family of amenable peripheral subgroups (in particular, hyperbolic groups);
\item right-angled Artin groups;
\item fundamental groups of compact orientable
$3$-manifolds.
\end{enumerate}
Then $G$ satisfies Property QITB.
\end{thm_intro}

For amenable and right-angled Artin groups we can prove more, at least when we extend by a torsion-free abelian group (meaning that we consider a central extension where the group on the left is torsion-free abelian). We call a central extension of $G$ \emph{virtually trivial} if it pulls back to a trivial central extension on a finite-index subgroup of $G$, see Definition \ref{defn:virtually_trivial}. It is easy to check that virtually trivial central extensions are quasi-isometrically trivial. 

\begin{thm_intro}\label{amenable:vt}
Let $G$ be a finitely generated amenable group, or a finitely generated right-angled Artin group. Then a central extension of $G$ by a  finitely generated 
torsion-free abelian group is quasi-isometrically
trivial if and only if it is virtually trivial. 
\end{thm_intro}

We remark that the torsion-freeness assumption cannot be dropped, see Remark \ref{rem:Thompson}. However, even without that assumption, one can replace ``if and only if it is virtually trivial'' with ``if and only if its Euler class has finite order'', see Theorem \ref{*virt}.

Our strategy to prove Theorems~\ref{families} and~\ref{amenable:vt} is as follows. We first show that they hold for  amenable groups. Then, for every group $G$, we 
introduce  the subspace $\ha(G,\R)\subseteq H_2(G,\R)$ 
generated by those elements of $H_2(G,\R)$ which lie in the image of a map $f_*\colon H_2(A,\R)\to H_2(G,\R)$, where
$f\colon A\to G$ is a homomorphism and $A$ is amenable. 
Whenever $\ha(G,\R)=H_2(G,\R)$ we are able to prove that any quasi-isometrically trivial extension by a torsion-free abelian group is virtually trivial. In order to prove
 Property QITB for all the groups listed in the statement of Theorem~\ref{families}, we then observe that such property holds whenever every element in the annihilator
of $\ha(G,\R)$ is bounded. 

\smallskip

Further examples of groups satisfying Property QITB may be built thanks to the following results:

\begin{prop_intro}\label{direct:prop}
Let $G_1$, $G_2$ be groups satisfying Property QITB. Then
the direct product $G_1\times G_2$ satisfies Property QITB.
\end{prop_intro}


\begin{prop_intro}\label{amalgamated:prop}
Let $G=G_1*_H G_2$ be a \emph{transverse} amalgamated product, where $H$ is amenable. If $G_1,G_2$ satisfy Property QITB, then
$G$ satisfies Property QITB.
\end{prop_intro}

We refer the reader to Definition~\ref{transverse:defn} for the notion of transverse amalgamated product. Free products are particular cases of 
transverse amalgamated products, hence we get the following:

\begin{cor_intro}\label{free:cor}
If $G_1,G_2$ satisfy Property QITB, then the free product $G_1*G_2$ satisfies QITB.
\end{cor_intro}

Using Propositions \ref{direct:prop} and \ref{amalgamated:prop} one can prove that the fundamental groups of many higher dimensional graph manifolds defined in \cite{FLS:graph} satisfy QITB.

Before investigating the relationship between Property QITB and other cohomological properties of groups, we ask here the following question,
which shows that,
surprisingly enough, the geometry of central extensions seems to still be quite elusive.

\begin{quest_intro}\label{undistorted}
If $1 \to Z\to E\to G\to 1$ is a quasi-isometrically trivial extension, then $Z$ is undistorted in $E$. There is no apparent reason why the converse
of this statement should also hold. Therefore, we ask here the following question:
does there exist a non-quasi-isometrically trivial extension $1 \to Z\to E\to G\to 1$ for which $Z$ is undistorted in $E$?
\end{quest_intro}

\subsection*{Weakly bounded cochains}
Let $A$ be an abelian group
(we will be mainly interested in the cases when either $A$ is finitely generated, or $A=\R$). 
We denote by $C^*(G,A)$ the bar resolution of $G$ with coefficients in $A$ (see e.g.~\cite[Chapter III]{brown}).  Recall that a cochain 
$\omega\in C^n(G,A)$ (which is a map $\omega\colon G^n\to A$) is \emph{bounded} if the set
$
\omega(G^n)
$
is bounded as a subset of $A$ (if $A$ is a finitely generated abelian group, this amounts to asking that $\omega(G^n)$ be finite). 
Following~\cite{NeuRee0,NeuRee} (where only the case of degree 2 was considered),
we say that a cochain $\omega \in C^n(G,A)$ is \emph{weakly bounded}
 if, for every fixed $(n-1)$-tuple
$(g_2,\ldots,g_n)\in G^{n-1}$, the set
$$
\omega(G,g_2,\ldots,g_n)\ \subseteq \ A
$$
is bounded.

Bounded cochains provide a subcomplex $C^*_b(G,A)$ of $C^*(G,A)$ (while weakly bounded cochains do not). The cohomology of $C^*_b(G,A)$
is denoted by $H^*_b(G,A)$. The inclusion of bounded cochains into ordinary cochains induces the \emph{comparison map}
$$
c^*\colon H^*_b(G,A)\to H^*(G,A)\ .
$$
We say that a class $\alpha\in H^*(G,A)$ is \emph{bounded} if it may be represented by a bounded cocycle, i.e.~if it lies in the image of the comparison map $c^*$,
and \emph{weakly bounded} if it may be represented by a weakly bounded cocycle.

Suppose now that $A$ is finitely generated.
As mentioned above, Gersten proved that a central extension is quasi-isometrically trivial provided it may be described by a bounded cocycle. 
Neumann and Reeves then observed that a central extension of a finitely generated group is quasi-isometrically trivial if and only if its Euler class is  weakly bounded
(see Corollary~\ref{weak:fund:cor}). Therefore, Theorem~\ref{counterexample:thm} implies the following:

\begin{cor_intro}\label{neumann:cor}
There exist a finitely generated group $G$ and a class $\alpha\in H^2(G,\mathbb{Z})$ such that $\alpha$ is   weakly bounded, but not bounded.
\end{cor_intro}

Moreover,  Theorem~\ref{families} and Propositions~\ref{direct:prop} and~\ref{amalgamated:prop} imply that   weak boundedness and boundedness are
indeed equivalent (in degree 2) for a large class of groups.

 \subsection*{$\ell^\infty$-cohomology}
 A key ingredient in our proof of Theorem~\ref{counterexample:thm} is the fact that
 weakly bounded classes may be characterized in terms of the  $\ell^\infty$--cohomology $H^*_{(\infty)}(G,A)$ of $G$, which was first defined
by Gersten in~\cite{gersten:preprint,Ger:last}, and further studied e.g.~in~\cite{gersten,mineyevell1, mineyevell2,BNW,blank:thesis,Milizia}. 

The $\ell^\infty$--cohomology of a group $G$ in degree $\leq n$
was originally defined via the cellular cohomology complex of an Eilenberg-MacLane space $X$ for $G$, under the assumption that $X$ has a finite $n$-skeleton.
It was observed by Wienhard~\cite[Section 5]{Wie} (see also~\cite[Section 6.3]{blank:thesis}) that $\ell^\infty$--cohomology may be defined in purely algebraic terms 
(i.e.~without referring to any cellular complex providing a model for $G$, hence without restricting to groups admitting models with finite skeleta). 
Recently, Milizia~\cite{Milizia} extended Gersten's definition of  the $\ell^\infty$-cohomology 
$H^*_{(\infty)}(X,A)$
of a cellular complex $X$ to avoid any assumption on the skeleta of $X$. He also proved that 
 $H^*_{(\infty)}(X,A)$ is canonically isomorphic to $H^*_{(\infty)}(G,A)$ whenever $X$ is an Eilenberg-MacLane space for $G$. This approach has the advantage to provide a topological interpretation of $H^*_{(\infty)}(G,A)$
also when $G$ is not finitely presented (or even finitely generated). This topological interpretation will prove very useful in our proof of Theorem~\ref{counterexample:thm}.

The $\ell^\infty$-cohomology
of a group comes with a natural
map $\iota^*\colon H^*(G,A)\to H^*_{(\infty)}(G,A)$. In Section~\ref{linfty:sec} we provide a direct proof of the following:

\begin{prop_intro}\label{fund:prop}
Let $\alpha\in H^n(G,A)$, $n\in\mathbb{N}$. Then $\alpha$ is  weakly bounded if and only if $\iota^n(\alpha)=0$.
\end{prop_intro}

As a corollary, we recover the following characterization of quasi-isometrically trivial central extensions, which was proved by Kleiner and Leeb
via a different strategy (see also~\cite[Theorem 0.3]{whyte}):

\begin{thm_intro}[{\cite[Theorem 1.8]{klelee}}]\label{klelee:thm}
Let $\alpha\in H^2(G,Z)$ be the Euler class of a central extension of a finitely generated group. Then $\iota^2(\alpha)=0$ in $H^2_{(\infty)}(G,Z)$ if and only if
the extension is quasi-isometrically trivial.
\end{thm_intro}

 Since bounded classes are weakly bounded, Proposition~\ref{fund:prop} implies that 
the composition
 \begin{equation}\label{exact:eq}
 \xymatrix{
 H^n_b(G,A) \ar[r]^{c^n} & H^n(G,A) \ar[r]^{\iota^n} & H^n_{(\infty)}(G,A)
 }
 \end{equation}
is the zero map for every $n\in\mathbb{N}$, a result which was already known (at least for $A=\R$) to Gersten (see~\cite[Proposition 10.3]{gersten} for the case where
$G$ admits an Eilenberg-MacLane model
with a finite $n$-skeleton, and~\cite{Wie} or \cite{blank:thesis} for the general case).
The following question was posed (for $A=\mathbb{R}$) by Wienhard in~\cite[Question 8]{Wie} and by Blank in~\cite[Question 6.3.10]{blank:thesis}:

\begin{quest_intro}\label{linftyquest}
When is the sequence~\eqref{exact:eq} exact?
\end{quest_intro}

The results proved in this paper partially answer this question. Indeed,
since the kernel of $\iota^n$ coincides with the space of weakly bounded classes,
 the sequence~\eqref{exact:eq} is exact if and only if 
every weakly bounded $n$-class  is bounded. For a fixed group $G$, it is not difficult to show that this condition
holds for $A=\mathbb{Z}$ if and only if it holds for every $A$, as 
$A$ varies in the class of finitely generated abelian groups
  (see Proposition~\ref{ZntoZ}). Therefore, we have the following:

\begin{prop_intro}\label{equivalenceform}
The sequence
$$\xymatrix{
 H^2_b(G,\mathbb{Z}) \ar[r]^{c^n} & H^2(G,\mathbb{Z}) \ar[r]^{\iota^n} & H^2_{(\infty)}(G,\mathbb{Z})
 }$$
is exact if and only if
the group $G$ satisfies Property QITB.
\end{prop_intro}

Some subtleties arise when comparing the case with integral coefficients with the case with real coefficients.
By Lemma~\ref{ZtoR1}  and Proposition~\ref{weakly:linf}, if the sequence
\begin{equation}\label{exactR:eq}
 \xymatrix{
 H^n_b(G,\R) \ar[r]^{c^n} & H^n(G,\R) \ar[r]^{\iota^n} & H^n_{(\infty)}(G,\R)
 }
 \end{equation}
 is exact, then the sequence
\begin{equation}\label{exactZ:eq}
 \xymatrix{
 H^n_b(G,\Z) \ar[r]^{c^n} & H^n(G,\Z) \ar[r]^{\iota^n} & H^n_{(\infty)}(G,\Z)
 }
 \end{equation}
 is also exact.  As a corollary of Theorem~\ref{counterexample:thm} we then have the following:

\begin{cor_intro}
Let $n=2$.
There exists a finitely generated group $G$ for which the sequences~\eqref{exactR:eq} and~\eqref{exactZ:eq}
 are \emph{not} exact. 
\end{cor_intro}

On the contrary, the same sequences are exact for all the groups listed in the statement of Theorem~\ref{families} (see Section~\ref{families:sec}).

  The question whether 
  the exactness of sequence~\eqref{exactZ:eq} implies the exactness of~\eqref{exactR:eq} 
 seems quite tricky: it would hold, for example, 
provided that the answer to Question~\ref{coefficients:quest}--(2) were positive.
  
 \begin{quest_intro}\label{coefficients:quest}
Let us say that a class $\alpha\in H^{n}(G,\R)$ is \emph{integral} if it belongs to the image of $H^n(G,\Z)$ via the change of coefficients homomorphism.
 As a consequence of Lemma~\ref{ZtoR1}, an integral class $\alpha\in H^{n}(G,\R)$ is (weakly) bounded if and only if it is the image of a (weakly) bounded class  
 in $ H^n(G,\Z)$ under the change of coefficients homomorphism.
  \begin{enumerate}
 \item Does every bounded class in $H^n(G,\R)$ belong to the $\R$-linear subspace spanned by integral bounded classes? Or, equivalently, does the image of
 $H^n_b(G,\Z)$ in $H^n(G,\R)$ span (over $\R$)  the space of bounded classes in  $H^n(G,\R)$?
 \item Does every weakly bounded class in $H^n(G,\R)$ belong to the $\R$-linear subspace spanned by weakly bounded integral classes? 
 \end{enumerate}
 \end{quest_intro}

Our example of a finitely generated group which does not satisfy Property QITB is not finitely presented.
In Section~\ref{linfty:sec} we show that in higher degrees it is possible (and much easier) to find finitely presented groups
supporting weakly bounded classes that are not bounded:

\begin{prop_intro}\label{many:counterexamples}
For every $n\geq 3$, there exists a finitely presented group for which the sequences~\eqref{exactR:eq} and~\eqref{exactZ:eq}  are not exact.
\end{prop_intro}

As discussed in the following subsection, the situation in degree 2 seems to be very different.

\subsection*{Property QITB and bounded differential forms}
 In~\cite{gromovasymptotic},
Gromov proposed the following:

\begin{conj_intro}[{\cite[page 93]{gromovasymptotic}}]\label{Gromov_conj}
Let $V$ be a closed Riemannian manifold, and let $\alpha\in H^2(V,\R)$.
Then $\alpha$ is $\widetilde{d}$-bounded if and only if it is bounded.\footnote{In view of the finitely presented example mentioned in the previous footnote, and Corollary \ref{conj_cor}, this conjecture is now known to be false.}
\end{conj_intro}

Recall from~\cite{gromovkah,gromovasymptotic} that a class $\alpha\in H^2(V,\R)$ is $\widetilde{d}$-bounded if the following holds:
if $\omega\in\Omega^2(V)$ is a closed differential form representing $\alpha$ via the de Rham isomorphism, and
$\widetilde{\omega}\in \Omega^2(\widetilde{V})$ is the lift of $\omega$ to the universal covering $\widetilde{V}$ of $V$, 
then $\widetilde{\omega}=d\varphi$ for some $\varphi\in \Omega^1(\widetilde{V})$ such that
 $\sup_{x\in\widetilde{V}} |\varphi_x|<+\infty$.
Moreover, $\alpha$ is bounded if it lies in the image of the comparison map between the singular bounded cohomology of $V$ and the usual singular cohomology of $V$,
i.e.~if it admits a representative $c$ in the singular chain complex such that
$c(\sigma)$ is uniformly bounded as $\sigma$ varies among all the singular simplices in $V$.

The study of the growth of primitives in non-compact manifolds was initiated by Sullivan~\cite{Sullivan2}, Gromov~\cite{grom:hypmani} and Brooks~\cite{brooks-lap}
and has then been proved to be closely related to coarse invariants of (fundamental) groups (see e.g.~\cite{Zuk,Nowak}). 
We refer the reader e.g.~\cite{sikorav} for a brief account on the topic, and for a self-contained proof of the fact that bounded classes are $\widetilde{d}$-bounded, and
to~\cite{CHI,BIgeo, Wie} for further developments of the theory.

In Section~\ref{linfty:sec} we prove the following:

\begin{thm_intro}\label{equiv:thm}
Let $V$ be a closed Riemannian manifold. Then $V$ satisfies the statement of Conjecture~\ref{Gromov_conj} if and only if
the sequence 
$$
\xymatrix{
 H^2_b(\pi_1(V),\R) \ar[r]^{c^n} & H^2(\pi_1(V),\R) \ar[r]^{\iota^n} & H^2_{(\infty)}(\pi_1(V),\R)
 }
$$
is exact (and, if this condition holds, then $\pi_1(V)$ satisfies QITB).
\end{thm_intro}

Since the class of fundamental groups of compact Riemannian manifolds coincides with the class of finitely presented groups,
we obtain the following:

\begin{cor_intro}\label{conj_cor}
If Conjecture~\ref{Gromov_conj} holds and $G$ is a finitely presented group, then 
the sequence
$$
\xymatrix{
 H^2_b(G,\R) \ar[r]^{c^2} & H^2(G,\R) \ar[r]^{\iota^2} & H^2_{(\infty)}(G,\R)
 }
 $$
 is exact. In particular, Gromov's Conjecture would imply that every finitely presented group satisfies QITB. 
In fact, Gromov's conjecture would be equivalent to the fact that every finitely presented group satisfies QITB provided that the answer to
Question~\ref{coefficients:quest}--(2) were affirmative for every finitely presented group. 
\end{cor_intro}

Note however that Gromov himself stated in~\cite{gromovasymptotic} that ``the evidence in favour of the conjecture is rather limited and it would  be safe
to make some extra assumption on $\pi_1(V)$''.

\subsection*{Plan of the paper} In Section~\ref{prelim:sec} we introduce bounded and weakly bounded cochains, and we prove that a central extension is quasi-isometrically
trivial if and only if its Euler class is weakly bounded. 
 In Section~\ref{linfty:sec} we introduce Gersten's $\ell^\infty$-cohomology, we prove the characterization of weakly bounded cochains described in Proposition~\ref{fund:prop} (which allows us to recover Theorem~\ref{klelee:thm} by Kleiner and Leeb), and we prove Proposition~\ref{many:counterexamples} and Theorem~\ref{equiv:thm}.
Section~\ref{counter:sec} is devoted to the proof of Theorem~\ref{counterexample:thm}, i.e.~to the construction of a finitely
generated group admitting a quasi-isometrically trivial extension with an unbounded Euler class. In Section~\ref{families:sec} we 
construct examples of groups satisfying Property QITB, and we 
prove Theorems~\ref{families} and~\ref{amenable:vt}, and Propositions~\ref{direct:prop} and~\ref{amalgamated:prop}.

\subsection*{Acknowledgements} The authors thank Marc Burger, Francesco Fournier Facio, Konstantin Golubev, Alessandra Iozzi, Andrea Maffei and Francesco Milizia for useful conversations.

A. S. was partially supported by the Swiss National Science Foundation (grant \#182186).

\section{Preliminaries}\label{prelim:sec}
\subsection*{Quasi-isometries}
Let us briefly recall the definition of quasi-isometry. If $(X,d)$, $(Y,d')$ are metric spaces, a map
$f\colon X\to Y$ is a quasi-isometric embedding if there exist constants $k\geq 1$, $c\geq 0$ such that
$$
\frac{d(x_1,x_2)}{k}-c\leq  d(f(x_1),f(x_2))\leq k\cdot d(x_1,x_2)+c\
$$
for every $x_1,x_2\in X$.
A map $g\colon Y\to X$ is a \emph{quasi-inverse} of $f$ if it is a quasi-isometric embedding, and the maps $g\circ f\colon X\to X$, $f\circ g\colon Y\to Y$ are uniformly close to the 
identity of $X$ and $Y$, respectively. A quasi-isometry is a quasi-isometric embedding that admits a quasi-inverse.

If $G$ is a finitely generated group and $S$ is a finite symmetric generating set for $G$ (symmetric meaning that $s\in S$ if and only if $s^{-1}\in S$), then the \emph{Cayley} graph $C_S(G)$ of $G$ with respect to $S$
is the graph having $G$ as set of vertices and $G\times S$ as set of edges, where the edge $(g,s)$ joins $g$ with $gs$. The graph $C_S(G)$ is endowed
with a path metric for which every edge is isometric to a segment of unitary length. It is well known that, if $S,S'$ are finite generating sets for $G$, then the identity
of $G$ extends to a quasi-isometry between $C_S(G)$ and $C_{S'}(G)$. Thus, one can define the quasi-isometry type of $G$ as the quasi-isometry type of any of its Cayley graphs.

\subsection*{Notation} If $g\in G$ then we denote by $\|g\|_S$ the distance in $C_S(G)$ between $g$ and the identity of the group
(i.e.~the minimal number of factors needed to describe $g$ as a product of elements of $S$ and their inverses).



\subsection*{(Weakly) bounded classes}
Let $A$ be an abelian group
(as in the introduction, we assume that either $A$ is finitely generated, or $A=\R$). 
Recall from the introduction that a class $\alpha\in H^*(G,A)$ is \emph{bounded} if it may be represented by a bounded cocycle,
and \emph{ weakly bounded} if it may be represented by a   weakly bounded cocycle. 

A $2$-cocycle $\omega\in C^2(G,A)$ is \emph{normalized} if $\omega(1,G)=\omega(G,1)=0$, where $1$ is the identity of $G$. It is well known that every cohomology
class may be represented by a normalized cocycle.

\begin{lemma}\label{linear:growth}
Let $S=\{x_1,\ldots,x_n\}$ be a symmetric set of generators of $G$, and let 
$\omega\in C^2(G,A)$ be a normalized cocycle. 
 Suppose that $|\omega(g,x_i)|\leq C$ for every $g\in G$, $i=1,\ldots,n$. Then
 $$| \omega(G,g)|\leq  2C \|g\|_S \quad \text{ for every} \ g\in G\ .
 $$
 In particular, $\omega$ is weakly bounded. 
\end{lemma}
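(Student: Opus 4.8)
The plan is to use the cocycle identity to control $\omega(g,h)$ in terms of the word length of $h$, by induction on $\|h\|_S$. Recall that the cocycle condition for $\omega \in C^2(G,A)$ reads
$$
\omega(g,h) - \omega(gh,k) + \omega(h,k) - \omega(g,hk) = 0
$$
for all $g,h,k \in G$, which we can rearrange as $\omega(g,hk) = \omega(g,h) + \omega(gh,k) - \omega(h,k)$.

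First I would set up the induction. For $\|h\|_S = 0$, i.e.\ $h = 1$, normalization gives $\omega(g,1) = 0$ for all $g$, so the bound $|\omega(g,h)| \leq 2C\|h\|_S = 0$ holds trivially. For the inductive step, suppose $\|h\|_S = m \geq 1$ and write $h = h' x_i$ with $\|h'\|_S = m-1$ and $x_i \in S$. Applying the rearranged cocycle identity with the triple $(g, h', x_i)$ gives
$$
\omega(g,h) = \omega(g, h' x_i) = \omega(g,h') + \omega(gh', x_i) - \omega(h', x_i).
$$
By the inductive hypothesis $|\omega(g,h')| \leq 2C(m-1)$, and by hypothesis $|\omega(gh', x_i)| \leq C$ and $|\omega(h', x_i)| \leq C$ (note $h'$ and $gh'$ are just elements of $G$, and the second slot is a generator $x_i \in S$). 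Hence $|\omega(g,h)| \leq 2C(m-1) + C + C = 2Cm = 2C\|h\|_S$, completing the induction. Applying this with $h = g$ (relabeling) yields $|\omega(G,g)| \leq 2C\|g\|_S$ for every $g \in G$.

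Finally, weak boundedness is immediate: fixing $g \in G$, the set $\omega(G,g)$ is contained in the ball of radius $2C\|g\|_S$ in $A$, which is bounded (indeed finite if $A$ is finitely generated). I do not anticipate a serious obstacle here; the only mild care needed is to make sure the word-length decomposition $h = h' x_i$ is chosen along a geodesic in the Cayley graph so that $\|h'\|_S = \|h\|_S - 1$, and to keep track that in each application of the cocycle identity the last argument is a single generator so that the hypothesis $|\omega(g,x_i)| \leq C$ applies. One should also note the argument is symmetric in the sense that it could equally be run peeling generators off the left, but peeling off the right is what matches the form $\omega(G,g)$ cleanly.
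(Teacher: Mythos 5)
Your proof is correct and follows essentially the same argument as the paper: induct on the word length of the element in the second slot, peel off a single generator from the right, and apply the cocycle identity to the resulting triple. The only cosmetic difference is that you name the variables $(g,h',x_i)$ where the paper uses $(h,g',x_i)$; the identities and bounds are identical.
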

\begin{proof}
Let $g$ be an element of $G$. 
We will prove by induction on $\|g\|_S$ that
$$
|\omega(G,g)|\leq 2C\|g\|_S\  .
$$
The case $\|g\|_S=0$ follows from the fact that $\omega$ is normalized.
Assuming the inequality for all elements of length $j-1$, if $\|g\|=j$  and $g=g'x_i$ with $\|g'\|_S=j-1$,
then for every $h\in G$ we have (by using the cocycle relation):
\begin{align*}
|\omega(h,g)| &= |\omega(h,g')-\omega(g',x_i)+\omega(hg',x_i)|
\\ & \leq |\omega(h,g')|+|\omega(g',x_i)|+|\omega(hg',x_i)|\\ &\leq 2C\|g'\|_S+C+C=2C\|g\|_S\ .
\end{align*}
\end{proof}

 \begin{rem}
 Our terminology slightly differs from Neumann and Reeves'. In fact, our weakly bounded cocycles
 correspond to \emph{right} weakly bounded cocycles in the terminology introduced 
in~\cite{NeuRee0, NeuRee}, where a cocycle $\omega\in Z^2(G,A)$ is called 
\emph{left weakly bounded} if $\omega(g,G)$ is a bounded subset of $A$ for every $g\in G$, and 
\emph{weakly bounded} if it is both right weakly bounded and left weakly bounded.
It is not difficult to show that a class in $H^2(G,A)$ admits a left weakly bounded representative if and only if it admits a right weakly bounded representative. 
In fact, it turns out that left weak boundedness, right weak boundedness and weak boundedness are equivalent 
for elements of $H^2(G,A)$, by~\cite[Theorem 4.1]{NeuRee}. 
 \end{rem}

\subsection*{The Euler class of a central extension}
Let us now consider a central extension
$$
\xymatrix{
1\ar[r] & Z\ar[r]^{i} & E \ar[r]^{\pi}&  G \ar[r] & 1 \ ,
}
$$
and let $s\colon G\to E$ be a section of $\pi\colon E\to G$. 
For $g_1,g_2\in G$, the element $s(g_1)s(g_2)s(g_1g_2)^{-1}$ lies in the kernel of $\pi$, hence in the image of $i$. Up to identifying $i(Z)$ with $Z$
we may thus define the cochain $\omega\in C^2(G,Z)$ given by
$$
\omega_s(g_1,g_2)=s(g_1)s(g_2)s(g_1g_2)^{-1}\ \in Z\ .
$$
Let us recall the  following well-known  facts (see e.g.~\cite[Chapter 4]{brown}):
\begin{enumerate}
\item The cochain $\omega_s$ is  a cocycle;
\item If $s'$ is another section of $\pi$, then $\omega_{s'}$ is cobordant to $\omega_s$; therefore, the class $[\omega_s]\in H^2(G,Z)$ does not depend
on the choice of $s$, and will be called the \emph{Euler class} of the extension;
\item If $\omega'$ is any representative of the Euler class, then there exists a section $s'\colon G\to E$ such that 
$\omega'=\omega_{s'}$;
\item The cocycle $\omega_s$ is normalized if and only if $s(1)=1$ (in this case, we say that $s$ is normalized too).
\end{enumerate}

Two central extensions are  isomorphic if they are described by the rows of a commutative diagram as follows:
$$
\xymatrix{
1\ar[r] & Z\ar[r]^{i} \ar[d]^{\rm Id} & E \ar[r]^{\pi}\ar[d]^h&  G \ar[r]\ar[d]^{\rm Id} & 1\\
1\ar[r] & Z\ar[r]^{i} & E' \ar[r]^{\pi}&  G \ar[r] & 1\ ,
}
$$
where $h$ is a homomorphism (hence, an isomorphism). It is readily seen that isomorphic central extensions share the same Euler class. In fact,
it readily follows from the facts listed above that, via  the Euler class, the module $H^2(G,Z)$ classifies central extensions of $G$ by $Z$ up to 
isomorphism.

\subsection*{The Euler class of a quasi-isometrically trivial central extension}
The following characterization of quasi-isometrically trivial extensions 
was proved by Kleiner and Leeb~\cite{klelee} in the case when $Z$ is a finitely-generated torsion free abelian group (i.e.~$Z\cong \mathbb{Z}^n$). 
However, the proof in~\cite{klelee} works verbatim in the general case. 

\begin{prop}[{\cite[Proposition 8.3]{klelee}}]\label{klelee}
Let
$$
\xymatrix{
1\ar[r] & Z\ar[r]^{i} & E \ar[r]^{\pi}&  G \ar[r] & 1 
}
$$
be a central extension. Then the following conditions are equivalent:
\begin{enumerate}
\item The extension is quasi-isometrically trivial.
\item The projection $\pi$ admits a Lipschitz section $s\colon G\to E$.
\end{enumerate}
\end{prop}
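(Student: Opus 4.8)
The plan is to prove the two implications separately, both via the equivalence of quasi-isometric triviality with the existence of a suitable quasi-isometry $f\colon E\to \Z\times G$ making the projection diagram commute up to bounded error.

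First I would prove that (1) implies (2). Suppose the extension is quasi-isometrically trivial, witnessed by a quasi-isometry $f\colon E\to \Z\times G$ with $\pi_2\circ f$ at bounded distance from $\pi$. Choose a quasi-inverse $\bar f\colon \Z\times G\to E$. The idea is to build a Lipschitz section of $\pi$ by first building a section ``on the $G$-side'' and then correcting it. Concretely, I would consider the composition $G\to \Z\times G\to E$, sending $g\mapsto (0,g)\mapsto \bar f(0,g)$. This map is Lipschitz because $g\mapsto (0,g)$ is an isometric embedding (with the $\ell^1$-type product metric on $\Z\times G$) and $\bar f$ is a quasi-isometry, hence coarsely Lipschitz; composing with the fact that $E$ is a graph, one upgrades coarse Lipschitz to genuinely Lipschitz by interpolating along geodesics/edges, or simply by noting that on a graph a $(k,c)$-coarse-Lipschitz map to another graph can be replaced by an honestly Lipschitz map at bounded distance. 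Now $\pi(\bar f(0,g))$ is at bounded distance from $\pi_2(f(\bar f(0,g)))$, which is at bounded distance from $\pi_2(0,g)=g$. So $g\mapsto \bar f(0,g)$ is a Lipschitz ``quasi-section'': $\pi\circ(\bar f(0,\cdot))$ is at bounded distance from $\mathrm{Id}_G$. Finally I would turn this quasi-section into an honest section: since $\pi\colon E\to G$ has fibers that are cosets of $\Z$ (which is undistorted? — not needed), for each $g$ the element $\bar f(0,g)$ differs from some preimage of $g$ by a uniformly bounded element of $E$; precisely, pick $e_g\in\pi^{-1}(g)$ with $d(e_g,\bar f(0,g))$ minimal, so $d(e_g,\bar f(0,g))$ is uniformly bounded, and then $g\mapsto e_g$ is a genuine section which is still Lipschitz (it is at bounded distance from a Lipschitz map between graphs, hence Lipschitz after the usual bounded-distance-on-graphs argument). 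This gives (2).

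Next, (2) implies (1). Suppose $s\colon G\to E$ is a Lipschitz section of $\pi$. Define $f\colon E\to \Z\times G$ by $f(e)=\big(e\cdot s(\pi(e))^{-1},\,\pi(e)\big)$, where we use that $e\cdot s(\pi(e))^{-1}\in\ker\pi=i(\Z)\cong\Z$ since $s$ is a section; note this is well-defined and, because $\Z$ is central, is a bijection with inverse $(z,g)\mapsto i(z)s(g)$. Clearly $\pi_2\circ f=\pi$, so the diagram commutes on the nose (bounded error $0$). It remains to check $f$ is a quasi-isometry. The inverse map $(z,g)\mapsto i(z)s(g)$ is Lipschitz: $i$ is a Lipschitz (indeed isometric onto its image up to the standard fact that $\Z$ is undistorted in... — actually we should just use that $i$ is a coarse embedding, which holds because $\|i(z)\|_E$ is a subadditive, symmetric function of $z$, hence at most linear, and it is proper; combined with the Lipschitz $s$ and the fact that multiplication in $E$ is coarsely Lipschitz with respect to word metrics, the map $(z,g)\mapsto i(z)s(g)$ is coarsely Lipschitz). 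Conversely $f$ is coarsely Lipschitz: $\pi$ is $1$-Lipschitz, and the $\Z$-coordinate $e\mapsto e\cdot s(\pi(e))^{-1}$ is coarsely Lipschitz because if $e'=ex$ for a generator $x$ then the two $\Z$-coordinates differ by $x\cdot s(\pi(e)x)^{-1}s(\pi(e))$, an element of $E$ of bounded $E$-norm (using $s$ Lipschitz), which maps to a $\Z$-element of bounded $\Z$-norm since $i^{-1}$ restricted to a bounded set is bounded. A coarsely Lipschitz bijection between graphs whose inverse is also coarsely Lipschitz is a quasi-isometry. This establishes (1).

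\textbf{Main obstacle.} The delicate point is the interplay between ``coarsely Lipschitz'' and ``Lipschitz'', and more importantly controlling the $\Z$-direction: one must know that $i\colon\Z\to E$ is a coarse embedding (equivalently that $\Z$ is undistorted in $E$, which as the paper notes in Question~\ref{undistorted} is automatic for quasi-isometrically trivial extensions but is exactly what we are trying to set up). For the direction (2)$\Rightarrow$(1) this is not circular: a Lipschitz section forces $\|i(z)\|_E$ to grow at least linearly, because $z = (\text{Lipschitz bounded terms})\cdot(\text{word of length }\sim\|i(z)\|_E)$ forces $\|z\|_\Z\lesssim \|i(z)\|_E$, while the reverse linear bound is automatic. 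So the Lipschitz section is precisely the hypothesis that makes the $\Z$-direction undistorted, and everything else is bookkeeping with quasi-isometry constants and the standard passage between coarse and honest Lipschitz maps on graphs. I expect the write-up of this distortion estimate, and of the bijection $f$ being a quasi-isometry, to be the only genuinely substantive part; the rest is routine.
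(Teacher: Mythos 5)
The paper itself does not prove this proposition; it is quoted directly from Kleiner--Leeb (\cite[Proposition 8.3]{klelee}) and used as a black box, so there is no in-text argument to compare yours against. Judged on its own, your proof is correct, and both implications are argued soundly by direct metric geometry.

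For (1)$\Rightarrow$(2): pulling back the ``zero section'' $g\mapsto(0,g)$ through a quasi-inverse $\bar f$ gives a coarsely Lipschitz quasi-section, which you then correct to a genuine section by snapping each $\bar f(0,g)$ to a nearest point of $\pi^{-1}(g)$; the correction is uniformly bounded because $\pi$ is a surjective $1$-Lipschitz homomorphism between finitely generated groups, so any point whose $\pi$-image is within distance $C$ of $g$ lies within bounded distance of $\pi^{-1}(g)$. Your remark about upgrading ``coarsely Lipschitz'' to ``Lipschitz'' is correct and can be said more simply: a $(k,c)$-coarsely Lipschitz map between vertex sets of connected graphs is automatically $(k+c)$-Lipschitz, by summing along a geodesic; no replacement at bounded distance is needed.

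For (2)$\Rightarrow$(1): the bijection $f(e)=(e\,s(\pi(e))^{-1},\pi(e))$ with inverse $(z,g)\mapsto i(z)s(g)$ is the natural construction, the diagram commutes on the nose, and your coarse Lipschitz estimates in both directions are right. One small comment on the ``Main obstacle'' paragraph: the specific step you flag --- that $i^{-1}$ carries bounded subsets of $i(\Z)\subset E$ to bounded subsets of $\Z$ --- is in fact automatic, since balls in $E$ are finite and $i$ is injective, so no circularity threatens at that point. The stronger statement that $\Z$ is undistorted in $E$ (i.e.\ $\|z\|_{\Z}\lesssim\|i(z)\|_E$) is indeed a consequence of the Lipschitz section, and your sketch of it --- walk along a geodesic from $1$ to $i(z)$ in $E$, observe that the $\Z$-coordinate $e\mapsto e\,s(\pi(e))^{-1}$ changes by a uniformly bounded element of $i(\Z)$ at each step --- is exactly the coarse-Lipschitz estimate you already established for that coordinate, so it costs nothing extra. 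In short: correct proof, with the ``obstacle'' you isolate being real but a little less delicate than you suggest.

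For what it's worth, the paper's own Lemma~\ref{weak:fund} offers an alternative, more algebraic route to the same equivalence once combined with the cocycle formalism (Lipschitz section $\Leftrightarrow$ weakly bounded Euler cocycle), but the authors nevertheless invoke Kleiner--Leeb for the statement as phrased; your direct metric argument is a perfectly good substitute and is closer in spirit to what one would expect in \cite{klelee}.
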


It is almost tautological that the $2$-cocycle $\omega$ associated to a section $s\colon G\to E$ is bounded if and only if
$s$ is a \emph{quasihomomorphism} in the sense of Kapovich and Fujiwara~\cite{FuKa}. Therefore, a group $G$ satisfies Property QITB if and only
if the existence of a Lipschitz section for a central extension of $G$ implies the existence of a quasihomomorphic section for the same extension.
We refer the reader to~\cite{heuer} for a discussion of (not necessarily central) extensions with bounded Euler class in terms of quasihomomorphisms.

The following Lemma~\ref{weak:fund} and its immediate Corollary~\ref{weak:fund:cor} 
play a fundamental role in our study of quasi-isometrically trivial central extensions. They are stated 
in~\cite[Section 4]{NeuRee0}.  For the sake of completeness, we provide here a proof of Lemma~\ref{weak:fund}.

\begin{lemma}\label{weak:fund}
Let $s\colon G\to E$ be a normalized section for the central extension
$$
\xymatrix{
1\ar[r] & Z\ar[r]^{i} & E \ar[r]^{\pi}&  G \ar[r] & 1 \ ,
}
$$
and let $\omega\in C^2(G,Z)$ be the associated 2-cocycle. Then $s$ is Lipschitz if and only if $\omega$ is  weakly bounded.
\end{lemma}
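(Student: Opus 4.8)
\textbf{Proof plan for Lemma~\ref{weak:fund}.}

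The plan is to relate the word metric on $E$ to the "size" of the $Z$-component of elements, as measured through the section $s$. Fix a finite symmetric generating set $S$ for $G$; since $Z$ is also finitely generated, lifting $S$ and adjoining a finite generating set of $Z$ produces a finite generating set $\widehat S$ for $E$. The key elementary observation is that for $g\in G$, an element of the form $s(g)\cdot i(z)$ has $E$-word-length comparable, up to bounded multiplicative and additive error, to $\|g\|_S + \|z\|$ where $\|z\|$ is the word length of $z$ in $Z$; this is because any geodesic word for $s(g)i(z)$ projects under $\pi$ to a word for $g$, and reading the cocycle relation along that word expresses the $Z$-part as a bounded sum of values $\omega(\cdot,x_i)$ plus the original $z$. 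I would record this comparison as the technical heart of the argument.

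\textbf{($\Leftarrow$)} Suppose $\omega$ is weakly bounded. First I would upgrade weak boundedness to the linear bound of Lemma~\ref{linear:growth}: by the equivalence of left and right weak boundedness recalled in the Remark (or by a direct argument), after possibly replacing $\omega$ by a cobordant normalized cocycle of the same form $\omega_{s'}$ we may assume $|\omega(g,x_i)|\le C$ for all $g\in G$ and all generators $x_i$, so that $|\omega(G,g)|\le 2C\|g\|_S$. Then, writing a geodesic $g=x_{i_1}\cdots x_{i_k}$ in $G$ with $k=\|g\|_S$, the product $s(x_{i_1})\cdots s(x_{i_k})$ equals $s(g)\cdot i(z)$ where $z$ is a sum of at most $k$ values of $\omega$ of the form $\omega(\text{prefix},x_{i_j})$, hence $\|z\|\le C'\|g\|_S$ for a constant $C'$ depending only on $S$ and $C$. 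Since $s(x_{i_1})\cdots s(x_{i_k})$ has $\widehat S$-length at most $k\cdot\max_i\|s(x_i)\|_{\widehat S}$, and $\|s(g)\|_{\widehat S}\le \|s(g)i(z)\|_{\widehat S}+\|i(z)\|_{\widehat S}\le k\cdot\max_i\|s(x_i)\|_{\widehat S} + C'\|g\|_S$, we conclude $\|s(g)\|_{\widehat S}\le L\|g\|_S = L\,d_{C_S(G)}(1,g)$. Combined with the general fact that $s$ is $\pi$-equivariantly a section so that distances in $G$ are controlled by distances in $E$, this gives that $s$ is Lipschitz (translating between $g$ and $g'$ by left multiplication, using that $\omega$ is a cocycle, reduces the estimate on $d(s(g),s(g'))$ to the estimate on $\|s(g^{-1}g')\|_{\widehat S}$ up to a bounded error coming from the lack of strict equivariance).

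\textbf{($\Rightarrow$)} Suppose $s$ is Lipschitz, say $\|s(g)\|_{\widehat S}\le L\|g\|_S + L$. Fix a generator $x_i\in S$ and an arbitrary $g\in G$. Then $i(\omega(g,x_i)) = s(g)s(x_i)s(gx_i)^{-1}$, and the right-hand side has $\widehat S$-length at most $\|s(g)\|_{\widehat S}+\|s(x_i)\|_{\widehat S}+\|s(gx_i)\|_{\widehat S}$. Now $\|s(g)\|_{\widehat S}$ and $\|s(gx_i)\|_{\widehat S}$ are each at most $L\|g\|_S+L+L$ (using $\|gx_i\|_S\le\|g\|_S+1$), so $\|i(\omega(g,x_i))\|_{\widehat S}\le 2L\|g\|_S + $ const, which grows only linearly in $\|g\|_S$ — not yet a bound. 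To get genuine boundedness I would instead compare with the \emph{other} factor: write the analogous identity for the left variable, $i(\omega(x_i,g)) = s(x_i)s(g)s(x_ig)^{-1}$, and observe more carefully that the product $s(x_i)s(g)$ differs from $s(x_ig)$ by the single element $i(\omega(x_i,g))$, so $\|i(\omega(x_i,g))\|_{\widehat S} = d_{\widehat S}(s(x_ig), s(x_i)s(g)) \le d_{\widehat S}(s(x_ig),s(g)) + \|s(x_i)\|_{\widehat S}$ after translating, and $d_{\widehat S}(s(x_ig),s(g))\le L\,d_{C_S(G)}(x_ig,g)+L = 2L$ since $x_ig$ and $g$ are at $G$-distance $1$ (here I use that left translation by $s(x_i)$ in $E$ covers left translation by $x_i$ in $G$ up to the central factor, which cancels in the distance computation). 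Hence $\|i(\omega(x_i,g))\|_{\widehat S}$ is bounded uniformly in $g$, i.e. $\omega$ is left weakly bounded, hence weakly bounded by the cited equivalence.

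\textbf{Main obstacle.} The delicate point throughout is that $s$ is merely a set-theoretic section, so left translation on $E$ by $s(g)$ does not literally descend to left translation by $g$ on $G$ compatibly with $s$ — the discrepancy is exactly the cocycle $\omega$, which is what we are trying to control. One must therefore be careful that the "translation tricks" used to reduce length estimates to distance-$1$ computations only incur a \emph{bounded} (not linearly growing) error; this works precisely because in the relevant distance computations the central correction terms $i(\omega(\cdot,\cdot))$ either cancel or get absorbed into the word length of the fixed generator $s(x_i)$. I would take care to make the direction ($\Rightarrow$) argument go through the left variable exactly for this reason, since the naive estimate on $\omega(g,x_i)$ via the Lipschitz bound only yields linear, not bounded, growth.
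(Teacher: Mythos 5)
Your $(\Leftarrow)$ direction is essentially the paper's argument and is sound, though two points deserve attention. First, the preamble about ``replacing $\omega$ by a cobordant normalized cocycle of the form $\omega_{s'}$'' is both unnecessary and dangerous: the bound $|\omega(g,x_i)|\le C$ is \emph{immediate} from the definition of weak boundedness (take $h=x_i$), and replacing $\omega_s$ by $\omega_{s'}$ would change the section — you would then be proving that $s'$ is Lipschitz, not $s$. Second, the phrase ``up to a bounded error coming from the lack of strict equivariance'' is imprecise: when you write $s(g')=s(g)\,s(g^{-1}g')\,i(-\omega(g,g^{-1}g'))$, the central correction $\omega(g,g^{-1}g')$ is not uniformly bounded; it grows with $d_G(g,g')$, but only \emph{linearly}, which is exactly what Lemma~\ref{linear:growth} supplies and what the Lipschitz estimate needs. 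You should invoke that lemma explicitly in this final step, as the paper does.

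The $(\Rightarrow)$ direction has a genuine gap. You discard the direct estimate on $\omega(g,x_i)$ because the crude triangle inequality $\|s(g)\|+\|s(x_i)\|+\|s(gx_i)\|$ yields only linear growth, and you switch to $\omega(x_i,g)$, claiming that $x_i g$ and $g$ are at $G$-distance $1$. That claim is false: with the paper's convention the Cayley metric is \emph{left}-invariant, so $d_G(g,gx_i)=1$ but $d_G(g,x_i g)=\|g^{-1}x_i g\|_S$, which is the word length of a conjugate of $x_i$ and can be arbitrarily large. Consequently the bound $d_{\overline S}(s(x_i g),s(g))\le 2L$ fails, and so does the subsequent bound on $\omega(x_i,g)$. (The intermediate step bounding $d_{\overline S}(s(g),s(x_i)s(g))$ by $\|s(x_i)\|_{\overline S}$ has the same defect: it would need right-invariance of the word metric on $E$, which does not hold.) The fix is not to switch to the left variable but to group the terms in $\omega(g,h)=s(g)s(h)s(gh)^{-1}$ correctly: using centrality of $Z$, write
$$\|\omega(g,h)\|_{\overline S}=\|s(gh)^{-1}s(g)s(h)\|_{\overline S}\le \|s(gh)^{-1}s(g)\|_{\overline S}+\|s(h)\|_{\overline S}=d_E\bigl(s(gh),s(g)\bigr)+\|s(h)\|_{\overline S}.$$
Now apply the Lipschitz bound: $d_E(s(gh),s(g))\le k\,d_G(gh,g)=k\,\|h\|_S$, which is a \emph{constant} for fixed $h$ (precisely because right multiplication by the fixed element $h$ moves distance $\|h\|_S$ in the left-invariant metric). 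This gives $\omega(G,h)$ bounded for every fixed $h$, i.e.\ weak boundedness, and it works for all $h$, not just generators. This is exactly the paper's argument, and it shows that the direct estimate you abandoned does succeed once you use the distance $d_E(s(gh),s(g))$ rather than summing the three individual word lengths.
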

\begin{proof}
Suppose $s$ is Lipschitz. 
Let $S=\{x_1,\ldots,x_n\}$ be a symmetric set of generators for $G$, $S'=\{z_1,\dots,z_k\}$ a symmetric set of generators for $Z$,
and let $\overline{S}$ be the set of generators for $E$ given by $\{s(x_i),\, i(z_j)\ |\ i=1,\dots,n\, ,\ j=1,\dots,k\}$.
 We also denote by $d_G$, $d_Z$ and $d_E$ the  word metrics on $G$ and $E$ induced by $S$, $S'$ and $\overline{S}$,
respectively.
Since $d_E$--balls of finite radius only contain a finite number of elements of $i(Z)$, in order to show
that $\omega$ is   weakly bounded 
it suffices to show that, for every $h\in G$, the value of  
$$
\|\omega(g,h)\|_{\overline{S}}=\|s(gh)^{-1}s(g)s(h)\|_{\overline{S}}
$$ 
is uniformly bounded as $g$ varies in $G$. 
But, if $k$ is a Lipschitz constant for $s$, then
\begin{align*}
\|s(gh)^{-1}s(g)s(h)\|_{\overline{S}}&\leq \|s(gh)^{-1}s(g)\|_{\overline{S}}+\|s(h)\|_{\overline{S}}=d_E(s(gh),s(g))+\|s(h)\|_{\overline{S}}\\ &\leq 
kd_G(gh,g)+\|s(h)\| _{\overline{S}}=
kd_G(h,1)+\|s(h)\| _{\overline{S}}\ ,
\end{align*}
which is independent of $g$, as required.

On the other hand, suppose that there exists $C\geq 0$ such that $\|\omega(G,x_i)\|_{S'}\leq C$ for every $i=1,\ldots,n$. 
Let $g$ be an element of $G$. 
We show by induction on $\|g\|_S$  that
$$
\|s(g)\| _{\overline{S}} \leq (1+C)\|g\|_{{S}}\ .
$$
The case $\|g\|_S=0$ follows from the fact that $s$ is normalized.
Let us assume the above inequality for all $g'\in G$ with $\|g'\|_S\leq j-1$, and suppose $\|g\|_S=j$.
Then $g=g'x_{i}$ for some $g'\in G$ with $\|g'\|_S= j-1$ and some $x_{i}\in S$. Now
\begin{align*}
\|s(g)\|_{\overline{S}}&=\|s(g')s(x_{i})i({-\omega(g',x_{i}))}\|_{\overline{S}}\\ &\leq
\|s(g')\|_{\overline{S}}+\|s(x_{i})\|_{\overline{S}}+\|i({-\omega(g',x_{i}))}\|_{\overline{S}}\\ &\leq
(1+C)\|g'\|_S+1+\|\omega(G,x_i)\|_{S'}\\ 
&\leq (1+C)(\|g\|_S-1)+1+C = (1+C)\|g\|_S\ .
\end{align*}
By Lemma~\ref{linear:growth} we now have
$$
\|\omega(G,x)\|_{S'}\leq 2C\|x\|_{S}\qquad \text{for every}\ x\in G\ .
$$

Take $g,h\in G$ and suppose $d(g,h)=k$. Then $g=hx$, where $\|x\|_S=k$.
We have
$$
s(g)=s(hx)=s(h)s(x)i(-\omega(h,x))\ ,
$$
hence
\begin{align*}
d_E(s(g),s(h))&=d_E(s(h)s(x)i(-\omega(h,x)),s(h))=\|s(x)i(-\omega(h,x))\|_{\overline{S}}\\ 
&\leq \|s(x)\|_{\overline{S}}+\|i(\omega(h,x))\|_{\overline{S}}\leq \|s(x)\|_{\overline{S}}+\|\omega(h,x)\|_{{S}'}\\ 
&\leq
(1+C)\|x\|_S+2C\|x\|_S =
(1+3C)d(g,h)\ .
\end{align*}
This concludes the proof.
\end{proof}

\begin{cor}\label{weak:fund:cor}
A central extension of $G$ by $Z$ is quasi-isometrically trivial if and only if its Euler class is a weakly bounded element of $H^2(G,Z)$.
\end{cor}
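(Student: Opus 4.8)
The plan is to deduce the corollary directly from Proposition~\ref{klelee} and Lemma~\ref{weak:fund}, the only work being to move freely between sections of $\pi$ and \emph{normalized} representatives of the Euler class. Recall from the facts listed above that every representative $\omega'$ of the Euler class is of the form $\omega_{s'}$ for some section $s'$, that $\omega_{s'}$ is normalized exactly when $s'(1)=1$, and that every cohomology class has a normalized representative. So I would reduce the statement to the two assertions: (a) a weakly bounded Euler class has a \emph{normalized} weakly bounded representative, and a quasi-isometrically trivial extension has a \emph{normalized} Lipschitz section; and (b) Proposition~\ref{klelee}, stated there for $Z=\Z$, remains valid for an arbitrary finitely generated abelian $Z$.

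For the implication ``weakly bounded Euler class $\Rightarrow$ quasi-isometrically trivial'' I would start from a weakly bounded cocycle $\omega'$ representing the Euler class. Using the cocycle identity one checks that every $2$-cocycle is constant on the slices $\{1\}\times G$ and $G\times\{1\}$, with common value $\omega'(1,1)$; subtracting the coboundary of the constant $1$-cochain $g\mapsto\omega'(1,1)$ then yields a normalized cocycle differing from $\omega'$ by a constant, hence still weakly bounded. By the facts above this normalized cocycle equals $\omega_{s'}$ for a section $s'$ with $s'(1)=1$, Lemma~\ref{weak:fund} shows $s'$ is Lipschitz, and Proposition~\ref{klelee} gives quasi-isometric triviality.

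For the converse, Proposition~\ref{klelee} supplies a Lipschitz section $s$. I would replace it by $g\mapsto s(1)^{-1}s(g)$, which is again a section since $s(1)\in\ker\pi$, and again Lipschitz because left translation is an isometry of the word metric on $E$; this makes $s$ normalized. Then $\omega_s$ is a normalized cocycle representing the Euler class, and Lemma~\ref{weak:fund} shows it is weakly bounded, so the Euler class is weakly bounded.

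The only genuinely non-formal point — and the one I expect to need a word of care — is (b): to handle an arbitrary finitely generated abelian $Z$ rather than just $\Z$, I would note that the equivalence ``quasi-isometrically trivial $\Leftrightarrow$ admits a Lipschitz section'' holds in this generality too, either because the Kleiner--Leeb argument is insensitive to the rank of $Z$ and to torsion, or by writing $Z\cong\Z^k\oplus F$ with $F$ finite and observing that a central extension by a finite group is automatically quasi-isometrically trivial and has automatically (weakly) bounded Euler class, so the factor $F$ is harmless. Everything else is just splicing the two previously established results.
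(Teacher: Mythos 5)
Your proof is correct and follows the route the paper implicitly intends: the paper calls the corollary ``immediate'' and does not write it out, but the intended deduction is exactly your splicing of Lemma~\ref{weak:fund} (normalized section is Lipschitz $\Leftrightarrow$ its cocycle is weakly bounded) with Proposition~\ref{klelee} (quasi-isometrically trivial $\Leftrightarrow$ Lipschitz section exists). Your extra care about normalization, and your flag that Proposition~\ref{klelee} is literally stated for $Z=\Z$ while the corollary allows general finitely generated abelian $Z$, is appropriate --- the Kleiner--Leeb argument (and the $Z\cong\Z^k\oplus F$ reduction you sketch) does extend without difficulty, so there is no genuine gap, just a small looseness in the paper's statement of Proposition~\ref{klelee}.
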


The following proposition  shows that, in order to detect Property QITB, it is sufficient to deal with extensions of $G$ by $\Z$.

\begin{prop}\label{ZntoZ}
Let $G$ be a finitely generated group. Then 
every weakly
bounded class in $H^2(G,\Z)$ is bounded if and only if 
every weakly bounded class in $H^2(G,Z)$ is bounded 
for every  finitely generated abelian group $Z$.
 \end{prop}
\begin{proof}
The ``only if'' part of the statement is obvious. Let then $Z$ be a finitely generated abelian group, 
and assume that every weakly
bounded class in $H^2(G,\Z)$ is bounded.
Let us consider a weakly bounded cocycle $\omega\in C^2(G,Z)$.
We have $Z\cong \Z^k\oplus F$, where $F$ is a finite abelian group, hence we may consider $\omega$ as a map
$$\omega\colon G^2\to \Z^k\oplus F\, ,\qquad \omega=(\omega_1,\dots,\omega_k,\omega_F)\ ,$$ 
where $\omega_i(g_1,g_2)$ (resp.~$\omega_F(g_1,g_2)$) is the $i$-th component of the projection of $\omega(g_1,g_2)$ onto $\Z^k$
(resp,~the projection of $\omega(g_1,g_2)$ onto $F$). Since $\omega$ is weakly bounded, every $\omega_i$ is also weakly bounded. 
Under our assumptions, 
this implies that $\omega_i$ is cobordant to a bounded cocycle $\omega'_i\in C^2(G,\Z)$ for every $i=1,\dots,k$. 
This easily implies that the cocycle $\omega$ is cobordant to the cocycle $\omega'=(\omega_1',\dots,\omega_k',\omega_F)$
in $C^2(G,Z)$. But $\omega'$ is bounded, and this concludes the proof. 
\end{proof}

Putting together Proposition~\ref{ZntoZ} and Corollary~\ref{weak:fund:cor} we get the following:

\begin{cor}\label{bastaZ}
Let $G$ be a finitely generated group. Then the following conditions are equivalent:
\begin{enumerate}
\item $G$ satisfies Property QITB.
\item Every weakly bounded class in $H^2(G,\Z)$ is bounded.
\end{enumerate}
\end{cor}

\subsection*{Integral vs.~real coefficients}
Bounded cohomology with real coefficients is better understood than bounded cohomology with integral coefficients (for example, for amenable groups
the real bounded cohomology vanishes, while bounded cohomology with integral coefficients may be non-trivial). Therefore, 
before proceeding with our investigation of Property QITB, we first point out the following
results, which sometimes will allow us to work with real coefficients, rather than with  integral ones.

\begin{lemma}\label{ZtoR1}
Let $\alpha\in H^n(G,\Z)$, and denote by $\alpha_\R$ the image of $\alpha$ in $H^n(G,\R)$ under the change of coefficients map. Then:
\begin{enumerate}
\item $\alpha$ is weakly bounded if and only if $\alpha_\R$ is weakly bounded;
\item $\alpha$ is bounded if and only if $\alpha_\R$ is bounded.
\end{enumerate}
\end{lemma}
\begin{proof}
Statement (2) is proved in~\cite[Theorem 15]{Mineyev1} and in~\cite[Proposition 2.18]{frigerio:book}, and the very same argument
also implies (1).


\end{proof}

\begin{cor}\label{ZtoR}
For every $n\in\mathbb{N}$, if every weakly bounded class in $H^n(G,\R)$ is bounded, then
every weakly bounded class in $H^n(G,\Z)$ is bounded and $G$ satisfies Property QITB.
\end{cor}
\begin{proof}
Let us denote by $\psi\colon H^n(G,\Z)\to H^n(G,\R)$ the change of coefficients map, and
%
%
let $\beta\in H^n(G,\Z)$ be weakly bounded. Then $\psi(\beta)\in H^n(G,\R)$ is weakly bounded, hence bounded by (2).
Lemma~\ref{ZtoR1} now implies that $\beta$ is bounded, as desired. The last assertion now follows from Corollary~\ref{bastaZ}. 
\end{proof}

\section{$\ell^\infty$-cohomology}\label{linfty:sec}

As mentioned in the introduction, weakly bounded classes may be characterized in terms of the so--called \emph{$\ell^\infty$--cohomology of $G$}, which 
we are now going to define. 

 Let $A$ be either a finitely generated abelian group, or the field of real numbers, and let
$\ell^\infty(G,A)$ be the module of bounded functions from $G$ to $A$ (here in the case where $A$ is finitely generated abelian, we say that a function is bounded if and only if it takes finitely many values). We can endow $\ell^\infty(G,A)$ with the structure of a 
left $G$-module via the left action defined by
$$
(g\cdot f)(h)=f(g^{-1}h)\, ,\qquad f\in\ell^\infty(G,A)\, ,\quad g,h\in G\ .
$$
We then denote by $C^*_{(\infty)}(G,A)$ the  cochain complex $C^*(G,\ell^\infty(G,A))$, and we define the $\ell^\infty$-cohomology 
$
H^*_{(\infty)}(G,A)
$
of $G$ as the cohomology of the complex $C^*_{(\infty)}(G,A)$ (we refer the reader e.g.~to~\cite{brown} for the definition of cohomology with twisted coefficients).

If we consider $A$ as a trivial $G$-module, then we can equivariantly embed $A$ into the submodule of $\ell^\infty(G,A)$ 
given by the constant maps.
This map induces a chain map $\iota^*\colon C^*(G,A)\to C_{(\infty)}^*(G,A)$, which defines in turn a map
$$
\iota^*\colon H^*(G,A)\to H_{(\infty)}^*(G,A)\ .
$$

We are now ready to prove Proposition~\ref{fund:prop} from the introduction, which we recall here for the convenience of the reader.
As mentioned in the introduction, Corollary~\ref{weak:fund:cor} and Proposition~\ref{fund:prop} 
allow us to recover Kleiner and Leeb's characterization
via $\ell^\infty$-cohomology
 of quasi-isometrically trivial central extensions. 

\begin{prop}\label{weakly:linf}
Let $\alpha\in H^n(G,A)$, $n\geq 2$. Then $\alpha$ is   weakly bounded if and only if $\iota^n(\alpha)=0$.
\end{prop}
\begin{proof}
Let $\omega\in C^n(G,A)$ be a representative of $\alpha$, and suppose $\iota^n(\alpha)=0$. This means that there exists a 
cochain $\varphi\in C_{(\infty)}^{n-1}(G,A)$, i.e.~a  map $\varphi\colon G^{n-1}\to \ell^\infty(G,A)$, such that, for every $g_1,\ldots,g_n,h\in G$,
\begin{align*}
\omega(g_1,\ldots,g_n)&=(\delta \varphi)(g_1,\ldots,g_n)(h)\\ &=g_1\cdot (\varphi(g_2,\ldots,g_n))(h)+\sum_{i=1}^{n-1} (-1)^i\varphi(g_1,\ldots,g_ig_{i+1},\ldots, g_n)(h)
\\ &+ (-1)^n \varphi(g_1,\ldots,g_{n-1})(h)\\ &=
\varphi(g_2,\ldots,g_n)(g_1^{-1}h)+
\sum_{i=1}^{n-1} (-1)^i\varphi(g_1,\ldots,g_ig_{i+1},\ldots, g_n)(h)
\\ &+ (-1)^n \varphi(g_1,\ldots,g_{n-1})(h)\ .
\end{align*} By setting $h=1$, we obtain
\begin{align*}
\omega(g_1,\ldots,g_n)&=\varphi(g_2,\ldots,g_n)(g_1^{-1})+\sum_{i=1}^{n-1} (-1)^i\varphi(g_1,\ldots,g_ig_{i+1},\ldots, g_n)(1)
\\ &+ (-1)^n \varphi(g_1,\ldots,g_{n-1})(1)\ .
\end{align*}
Therefore, if we set 
$$
f\in C^{n-1}(G,A)\, ,\qquad f(g_1,\ldots,g_{n-1})=-\varphi(g_1,\ldots,g_{n-1})(1)\ ,
$$
then 
\begin{align*}
|(\omega+\delta f)(g_1,\ldots,g_n)|&=|\varphi(g_2,\ldots,g_n)(g_1^{-1})-\varphi(g_2,\ldots,g_n)(1)|\\ &\leq 2\|\varphi(g_2,\ldots,g_n)\|_\infty\ .
\end{align*}
Hence $$|(\omega+\delta f)(G,g_2,\ldots,g_n)|\leq 2\|\varphi(g_2,\ldots,g_n)\|_\infty<+\infty\ ,$$ and $\alpha$ is   weakly bounded.

Suppose now that $\omega\in C^n(G,A)$ is a   weakly bounded representative of $\alpha$, and set
$$
\varphi\colon G^{n-1}\to \ell^\infty(G,A)\, ,\qquad \varphi(g_1,\ldots,g_{n-1})(h)=\omega(h^{-1},g_1,\ldots,g_{n-1})
$$
(the fact that $\varphi$ is well-defined is due to the   weak boundedness of $\omega$).
Then, for every $g_1,\ldots,g_n,h\in G$ we have
\begin{align*}
(\delta \varphi)(g_1,\ldots,g_n)(h)&=
\omega(h^{-1}g_1,g_2,\ldots,g_n)+\sum_{i=1}^{n-1} (-1)^i\omega(h^{-1}, g_1,\ldots,g_ig_{i+1},\ldots, g_n)
\\ &+ (-1)^n \omega(h^{-1},g_1,\ldots,g_{n-1})\\ & =\omega(g_1,\ldots,g_n)\ ,
\end{align*}
where the last equality is due to the fact that $\omega$ is a cocycle. Thus $\iota^n(\alpha)=0$, as desired.
\end{proof}


\subsection*{Higher degrees}
As we explain below, the following result readily implies Proposition~\ref{many:counterexamples} from the introduction. 
\begin{prop}\label{pd-prop}
Let $A=\R$ or $\Z$,
let $G$ be an $n$-dimensional non-amenable Poincar\'e duality group, and let $G'=G\times \Z$. Then the sequence
$$
\xymatrix{
 H^{n+1}_b(G',A) \ar[r]^{c^{n+1}} & H^{n+1}(G',A) \ar[r]^{\iota^{n+1}} & H^{n+1}_{(\infty)}(G',A) 
 }
$$
is not exact.
\end{prop}
\begin{proof}
By Lemma~\ref{ZtoR1}  and Proposition~\ref{weakly:linf}, it is sufficient to prove the proposition for $A=\Z$.

We first show that 
\begin{equation}\label{claim1}
H^{n+1}_{(\infty)}(G',\Z)=0\ .
\end{equation}
Since $G'$ is  an $(n+1)$-dimensional Poincar\'e duality group,
we have 
$$
H^{n+1}_{(\infty)}(G',\R)=H^{n+1} (G',\ell^\infty(G',\R))\cong H_0(G',\ell^\infty(G',\R))\ .
$$
It was first observed in~\cite{Niblo} that 
 $H_*(G',\ell^\infty(G',\R))$ is isomorphic to the so-called \emph{uniformly finite homology} of $G'$
(see also~\cite{Diana1,Diana2}). By a fundamental result by Block and Weinberger, amenable groups
can be characterized as those groups for which  uniformly finite homology does not vanish~\cite{BW} in degree 0. Since $G'$ contains the non-amenable
group $G$ as a subgroup, it is itself non-amenable, hence 
$H^{n+1}_{(\infty)}(G',\R)=0$. In order to get~\eqref{claim1} it is now sufficient to recall 
from~\cite[Proposition 7.2]{Gersten-lin-isop} that $H^{n+1}_{(\infty)}(G',\Z)\cong H^{n+1}_{(\infty)}(G',\R)$.
As usual, Gersten proved this fact under the assumption that $G'$ admits a finite $n$-skeleton. For the general case, observe 
that the short exact sequence
$0\to \mathbb{Z}\to \mathbb{R}\to \mathbb{R}/\mathbb{Z}\to 0$ of coefficients induces the short exact sequence of $G'$-modules 
$
0\to \ell^\infty(G',\mathbb{Z})\to \ell^\infty (G',\mathbb{R})\to \ell^\infty (G', \mathbb{R}/\mathbb{Z})\to 0,
$
hence the short exact sequence of complexes
\begin{equation}\label{shortcoeff}
0\to C^*_{(\infty)}(G',\mathbb{Z})\to C^*_{(\infty)}(G',\mathbb{R})\to C^*_{(\infty)}(G',\mathbb{R}/\mathbb{Z})\to 0\ .
\end{equation}
Observe now that $\ell^\infty(G',\mathbb{R}/\mathbb{Z})\cong \text{Hom}(\mathbb{Z}G', \mathbb{R}/\mathbb{Z})$ as $G'$-modules, and recall
that $\text{Hom}(\mathbb{Z}G', \mathbb{R}/\mathbb{Z})$ is an injective $G'$-module (see e.g.~\cite[Proposition 6.1]{brown}).
Therefore, $H^n_{(\infty)}(G',\mathbb{R}/\mathbb{Z})=H^n(G', \ell^\infty(G,\mathbb{R}/\mathbb{Z}))=0$ for every $n\geq 1$. By looking at the long exact sequence
in cohomology induced by~\eqref{shortcoeff} we may now conclude that $H^{n}_{(\infty)}(G',\Z)\cong H^{n}_{(\infty)}(G',\R)$ for every $n\geq 1$.

Let now $\alpha\in H^{n+1}(G',\Z)$ be bounded. 
The group homomorphism
$h\colon G\times \Z\to G\times \Z$, $h(g,m)=(g,2m)$ induces the multiplication by $2$ on $H^{n+1}(G',\Z)$. Since maps induced by homomorphisms do not increase the seminorm
of cohomology classes, this implies that $\|\alpha\|_\infty=0$. Thus $\langle \alpha ,\beta\rangle=0$ for every $\beta\in H_{n+1}(G',\Z)$.
Since $G'$ is an $(n+1)$-Poincar\'e duality group, this implies in turn that
 $\alpha=0$. We have thus shown that both maps in the sequence of the statement (with $A=\Z$) are null. Since $H^{n+1}(G',\Z)\neq 0$,
 the sequence is not exact.
\end{proof}

The following corollary implies Proposition~\ref{many:counterexamples} from the introduction.

\begin{cor}
Let $A=\Z$ or $\R$.
For every $n\geq 3$, let $G_n=\G_2\times \Z^{n-2}$, where $\G_2$ is the fundamental group of the closed oriented surface of genus 2. Then the sequence 
$$
\xymatrix{
 H^{n}_b(G_n,A) \ar[r]^{c^{n}} & H^{n}(G_n,A) \ar[r]^{\iota^{n}} & H^{n}_{(\infty)}(G_n,A) 
 }
$$
is not exact. 
\end{cor}
\begin{proof}
We can apply the previous proposition to the non-amenable $(n-1)$-dimensional Poincar\'e duality group $G=\G_2\times \Z^{n-3}$.
\end{proof}

\subsection*{Property QITB and Gromov's Conjecture}
The strategy described in Proposition~\ref{pd-prop} cannot be implemented in degree 2. Indeed, as stated in the introduction we have the following:

\begin{varthm}[Theorem~\ref{equiv:thm}]
Let $V$ be a compact  Riemannian manifold.
Then the following are equivalent:
\begin{enumerate}
\item A class $\alpha\in H^2(V,\R)$
 is $\widetilde{d}$-bounded if and only if it is bounded.
\item The sequence
$$
 \xymatrix{
 H^2_b(\pi_1(V),\R) \ar[r]^{c^2} & H^2(\pi_1(V),\R) \ar[r]^{\iota^2} & H^n_{(\infty)}(\pi_1(V),\R)
 }
 $$
is exact.
\end{enumerate}
In particular, by Corollary~\ref{ZtoR} and Proposition~\ref{weakly:linf}, if $V$ satisfies Gromov's Conjecture~\ref{Gromov_conj}, then
 $\pi_1(V)$ satisfies Property QITB.
\end{varthm}
\begin{proof}
We denote by $\Omega_\flat^*(\widetilde{V})$ the space of bounded differential forms on $\widetilde{V}$ with bounded differential, and 
by $H_\flat^*(\widetilde{V})$ the associated cohomology (caveat: $H_\flat^*(\widetilde{V})$ is not at all equal to $H_b^*(\widetilde{V},\R)$; indeed, 
since $\widetilde{V}$ is simply connected, $H_b^n(\widetilde{V},\R)=0$ for every $n\geq 1$~\cite[page 40]{Gromov}, \cite[Theorem 2.4]{Ivanov}, \cite[Corollary 4]{FriMoM}, while we will see below that 
$H_\flat^*(\widetilde{V})$ is isomorphic to $H^*_{(\infty)}(\widetilde{V},\R)$, which is often non-trivial in positive degree).

 Observe that, if $\omega$ is a $k$-differential form
on $V$, then the pull-back $\widetilde{\omega}$ to $\widetilde{V}$ is equivariant with respect to a cocompact action, hence it belongs to $\Omega_\flat^*(\widetilde{V})$. 
We thus have a map $\psi\colon H^*(V)\to H^*_\flat (\widetilde{V})$, where $H^*(V)$ denotes the usual de Rham cohomology of $V$. Via the identification $H^*(V)\cong H^*(V,\R)$
due to de Rham isomorphism, the kernel
of $\psi$ coincides with the space of $\widetilde{d}$-bounded classes defined in the introduction, by definition of $\widetilde{d}$-bounded.


Every smooth manifold admits a PL-structure~\cite{Whitehead, Munkres}, hence $V$ is homeomorphic to the geometric realization of a simplicial complex. If we endow the universal covering $\widetilde{V}$
with the induced simplicial structure, integration provides a  chain map $\Omega_\flat^*(\widetilde{V})\to C^*_{(\infty)} (\widetilde{V},\R)$, where $C^*_{(\infty)}(\widetilde{V},\R)$ denotes
here the space of 
bounded cellular cochains on $\widetilde{V}$. If we denote by $H^*_{(\infty)}(\widetilde{V},\R)$ the cohomology of $C^*_{(\infty)}(\widetilde{V},\R)$, we thus get a map
$$
I^*\colon H^*_\flat(\widetilde{V})\to H^*_{(\infty)}(\widetilde{V},\R)\ .
$$
It is shown in~\cite[Theorem 1.11]{Milizia} (see also \cite[Theorem 3.1]{mineyevell1}) that the map $I^*$ is 
an isomorphism in every degree. Moreover, 
by lifting cellular cochains from $V$ to $\widetilde{V}$ we get a well-defined map $\iota^*_V\colon H^*(V,\R)\to H^*_{(\infty)}(\widetilde{V},\R)$.

It readily follows
from the explicit description of the de Rham isomorphism that the following diagram is commutative:
$$
\xymatrix{
H^*(V) \ar[r]^{\psi} \ar[d]^{\cong}& H^*_\flat(\widetilde{V})\ar[d]^{\cong}\\
H^*(V,\R) \ar[r]^{\iota^*_V} & H^*_{(\infty)}(\widetilde{V},\R)\ ,
}
$$
where the vertical arrows correspond to the de Rham isomorphism and to the map $I^*$.
Therefore, a class $\alpha\in H^*(V,\R)$ is $\widetilde{d}$-bounded if and only if it belongs to the kernel
of $\iota^*_V$, and condition (1) is equivalent to the exactness of the sequence
$$
\xymatrix{
H^2_b(V,\R) \ar[rr]^{c^2_V} & & H^2(V,\R) \ar[rr]^{\iota^2_V} & & H^2_{(\infty)}(\widetilde{V},\R)\ .
}
$$

Let us now set $G=\pi_1(V)$, and 
let $f\colon V\to X$ be a classifying map, where $X$ is a $K(G,1)$-space.  We then have canonical 
identifications $H^*(X,\R)\cong H^*(G,\R)$ and $H^*_b(X,\R)\cong H^*_b(G,\R)$ (see e.g.~\cite[Chapter 5]{frigerio:book}). 
We may assume that $V$ and $X$ share the same $2$-skeleton. In particular, the $2$-skeleton of $X$ is finite, and 
we have  a canonical identification $H^*_{(\infty)}(\widetilde{X},\R)= H^*_{(\infty)}(G,\R)$ (see e.g.~\cite{Wie,blank:thesis, Milizia}; indeed, in Gersten's original approach
the module $H^*_{(\infty)} (G,\R)$ was \emph{defined} via this identification).
We thus have the following commutative diagram, where the vertical arrows are all induced by $f\colon V\to X$:
\begin{equation}\label{diagramma}
\xymatrix{
 H^2_b(G,\R) \ar[r]^{c^2_G} \ar[d]^{f_b^*}& H^2(G,\R) \ar[r]^{\iota^2_G} \ar[d]^{f^*}& H^2_{(\infty)}(G,\R) \ar[d]^{f_{(\infty)}^*}\\
 H^2_b(V,\R) \ar[r]^{c^2_V} & H^2(V,\R) \ar[r]^{\iota^2_V} & H^2_{(\infty)}(\widetilde{V},\R)\ .
}
\end{equation}

Using that $V$ and $X$ share the same $2$-skeleton (and the fact that singular homology  may be computed
via cellular cochains), it is easy to show that $f^*$ and $f^*_{(\infty)}$ are both injective.

(2) $\Rightarrow$ (1):  Let us suppose the top row of the diagram is exact, 
and take an element $\alpha\in H^2(V,\R)$ with $\iota^2_V(\alpha)=0$. Since the pull-back of $\alpha$ to $\widetilde{V}$
is null (as an ordinary cohomology class), $\alpha$ vanishes on every homology class which may be represented by a sphere. As a consequence, there exists $\beta\in H^2(G,\R)$
with $f^*(\beta)=\alpha$.  Together with the injectivity of $f^*_{(\infty)}$, the fact that $\iota^2_V(\alpha)=0$ now implies
that $\iota^2_G(\beta)=0$. We thus have $\beta=c^2_G(\beta_b)$ for some $\beta_b\in H^2_b(G,\R)$. By the commutativity of the left square of the diagram,
this implies that $\alpha$ is bounded, hence condition (2) holds.

\smallskip 

(1) $\Rightarrow$ (2). 
Suppose now that the bottom row of the diagram is exact, and
take  $\beta \in \ker i^2_G \subseteq H^2(G,\R)$. 
Then $f^*(\beta)$ lies
in the image of $c^2_V$. Now a fundamental theorem by Gromov ensures that $f^*_b$ is an isomorphism~\cite[Corollary (A) at page 40]{Gromov},
\cite[Theorem 4.3]{Ivanov}, \cite[Theorem 3]{FriMoM}, and this (together with the injectivity of $f^*$)
implies that $\beta$ is bounded, as desired. 
\end{proof}

\section{A group without Property QITB}\label{counter:sec}
We are now ready to exhibit a finitely generated group $G$ admitting a quasi-isometrically trivial central extension whose Euler
class is not bounded. 

\begin{rem}
 (The idea of the construction.) We briefly and informally describe the group that we will be studying. Start with a free product $\widehat G$ of 
 countably many copies of the fundamental group of the genus-2 surface. Any cohomology class of degree 2 on $\widehat G$ that takes unbounded values on the surfaces cannot be bounded. However, all these classes are weakly bounded, the rough reason being the following. Each such class can be represented by the (infinite) sum $\omega$ of pull-backs of $2$-cocycles on the copies of the fundamental 
 group of the genus-2 surface, where we take the pull-back under the natural projection to a factor of the free product. Since the genus-2 surface is hyperbolic, these $2$-cocycles may be chosen to be bounded. Moreover, for any pair $(g_1,g_2)$ of elements
 of $\widehat G$, only a finite number of projections of $g_1$ and $g_2$ on the free factors of $\widehat{G}$ are non-trivial. Putting all these facts together it is easy to show that the set
 $\omega(\widehat{G},g_1,g_2)$ is bounded, hence every degree-2 class on $\widehat{G}$ is weakly bounded.

  The idea is then to ``make $\widehat G$ finitely generated''. This can be done by adding ``stable letters'' $t_i$ to $\widehat G$ that conjugate the generators of each of the surface groups to the generators of the ``next'' surface group. In this way, the generators of the first surface group and the stable letters suffice to generate the new group $G$. Notice that $\widehat G$ is the fundamental group of a locally CAT(0) complex obtained by gluing surfaces and squares (we do not need this fact), which is one way to control the (co)homology of $G$. The idea is that some of the cohomology classes on $\widehat G$ that we discussed above should be the pull-back of weakly bounded classes on $G$, and we believe that it is possible to show this using CAT(0) techniques. However, below we actually give a different description of $G$ that will allow us to use small-cancellation techniques instead. (We will not need the equivalence of the two descriptions.)
\end{rem}

\subsection{Description of the group}
\label{subsec:description}
Let us now proceed with the construction. Let $G$ be the finitely generated group described by the 
following (infinite) presentation $\mathcal{P}$:
$$
\langle a_1,a_2,a_3,a_4,t_1,t_2,t_3,t_4\, |\, r_0,r_1,\dots,r_i,\dots\rangle\ ,
$$
$$
r_i=\left[ t_1^ia_1t_1^{-i},t_2^{i}a_2t_2^{-i}\right] \cdot \left[ t_3^ia_3t_3^{-i},t_4^{i}a_4t_4^{-i}\right]\ ,
$$
where $[w_1,w_2]$ denotes the commutator $w_1w_2w_1^{-1}w_2^{-1}$. We denote by
$F_8$ the free group on the generators $a_1,\dots,a_4,t_1,\dots,t_4$, and by
 $N$ the normal closure
of the relations $r_i$ in $F_8$, so that $G\cong F_8/N$.

Notice that all relations are products of two commutators, and therefore are best thought of as yielding a closed surface of genus 2 in the corresponding presentation complex. As in the heuristic explanation above, the standard generators of the fundamental groups of any two such surfaces are conjugate, but the conjugating elements are different so one can expect that the whole fundamental groups are not conjugate to each other. Indeed, we will show that the surfaces yield distinct elements in homology.

\subsection{Properties of the group}
A key property of the group $G$ is that its presentation $\mathcal{P}$ satisfies the following
small cancellation condition:

\begin{lemma}\label{cancellation}
The presentation $\mathcal{P}$ satisfies the $C'(1/7)$ small cancellation condition. Moreover, no $r_i$ is a proper power in $F_8$.
\end{lemma}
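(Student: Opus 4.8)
The plan is to verify the $C'(1/7)$ condition directly from the combinatorial structure of the relators $r_i$. First I would set up notation: each relator $r_i$ is a concatenation of four conjugate-of-generator blocks $t_j^{\pm i} a_j t_j^{\mp i}$ woven into two commutators, so a cyclically reduced word in the symmetrized set $R$ (all cyclic rotations of $r_i^{\pm 1}$ for all $i$) has a rigid block structure: it alternates between (signed) powers of the $t$'s and isolated occurrences of the $a_j^{\pm 1}$'s. I would first record that every $r_i$ is cyclically reduced and that the symmetrized closure contains no $r$ equal to a cyclic rotation of another $r^{-1}$ (so $\mathcal P$ has no "obvious'' degeneracies), and note $|r_i| = 8|i| + 4$ for $i\neq 0$ and $|r_0| = 4$.

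Next I would analyze pieces. A piece is a common prefix of two distinct elements of $R$. The key point is that the letters $a_1,\dots,a_4$ occur in each $r_i$ only at four prescribed "slots,'' each surrounded by specified powers of the corresponding $t_j$; reading a subword that contains even one $a_j^{\pm 1}$ forces you to know which slot you are in and (from the exponents of $t_j$ flanking it, together with sign patterns) pins down the index $i$ up to the ambiguities coming from cyclic rotation and inversion. So any piece that contains an $a_j$-letter from two genuinely different relators $r_i, r_{i'}$ with $i\neq i'$ must be short — I expect it to contain at most one $a$-letter and at most one maximal $t$-power on each side, hence length at most something like $2|i|+2$ or so; meanwhile a piece containing no $a$-letter at all is just (a piece of) a single power $t_j^{\pm k}$ with $k \le |i|$, so length $\le |i|$. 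In all cases the bound on a piece $p$ appearing in $r_i$ comes out (comfortably) below $|r_i|/7 \approx (8|i|+4)/7$; the worst ratios occur for small $i$, so I would check $i = 0, 1, 2$ by hand and then give the general estimate. The case $i=0$ is special since $r_0 = [a_1,a_2][a_3,a_4]$ has length $4$ and involves no $t$'s, so its only overlaps with other relators are single letters or at most two-letter subwords, which is fine against $4/7$.

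The main obstacle, and the step requiring genuine care rather than routine estimation, is the bookkeeping for pieces shared between $r_i$ and $r_{-i}$, or between a relator and its own other cyclic rotations/inverse: here the two relators literally share long $t_j$-power subwords of length $|i|$ built from the same exponent, so one must check that these long $t$-power pieces cannot be extended across an $a$-letter (because the sign pattern $t_1^i a_1 t_1^{-i}$ versus $t_1^{-i} a_1 t_1^i$ etc., and the interleaving into commutators, breaks the match), keeping every such piece to length $\le |i| + O(1)$, safely under $(8|i|+4)/7$. Concretely I would argue: if a piece $p$ has length $> |i|$ it must straddle a boundary between a $t$-power block and an $a$-letter in $r_i$, read off the exact local word $t_j^{\varepsilon|i|} a_j^{\delta} t_j^{-\varepsilon|i|}$ around that $a$, and then show that matching this against any other element of $R$ already determines the relator and the position, so $p$ extends to at most one more block on each side, giving $|p| \le 2|i| + 2 < |r_i|/7$ once $|i|$ is not tiny, with the small cases dispatched separately. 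Assembling these case bounds yields $C'(1/7)$.
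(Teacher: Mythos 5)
Your strategy (estimate pieces from the block structure of the $r_i$ and compare against $|r_i|/7$) is the same as the paper's, but your length count is wrong by a factor of $2$, and this breaks the final arithmetic. Each commutator $[w_1,w_2]=w_1w_2w_1^{-1}w_2^{-1}$, with each $w_j$ a block of reduced length $2i+1$, has reduced length $4(2i+1)=8i+4$, and $r_i$ is a product of \emph{two} such commutators over disjoint letter sets, so $|r_i|=16i+8$; in particular $|r_0|=8$, not $4$ (count the letters of $a_1a_2a_1^{-1}a_2^{-1}a_3a_4a_3^{-1}a_4^{-1}$). With your count $8|i|+4$, the inequality you assert at the end, $|p|\le 2|i|+2 < |r_i|/7$, is false for \emph{every} $i$: already $i=0$ would require $2<4/7$, and the ratio tends to $1/4>1/7$ as $i\to\infty$, so checking small cases by hand would not help. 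The second commutator is precisely what the construction needs to halve the ratio: the paper bounds pieces between $r_i$ and $r_k$ (for $i<k$) by $\lvert t_j^i a_j^{\pm 1} t_j^{-i}\rvert = 2i+1$ and obtains $(2i+1)/(16i+8)=1/8<1/7$. Also note the index $i$ runs over $\mathbb{N}$, so there is no relator $r_{-i}$ and the absolute values in $|i|$ are spurious.

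On the other hand, your instinct to single out pieces shared between cyclic conjugates of $r_i$ and cyclic conjugates of $r_i^{-1}$ as the step requiring genuine care is sound, and in fact the paper's one-line estimate only addresses the case $i<k$. But the bound $|p|\le |i|+O(1)$ you propose there is simply asserted, and it is not obviously correct: the block around an $a_j$-letter together with the initial $t$-power of the adjacent block can match between a cyclic rotation of $r_i$ and one of $r_i^{-1}$, producing overlaps longer than $|i|$. This is the part of the argument where writing out the matches explicitly, rather than waving at ``$|i|+O(1)$'', is actually necessary.
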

\begin{proof}
Observe that $|r_i|=16i+8$. Moreover, if $i< k$, then the longest common pieces shared by $r_i$ and $r_k$ (or by their cyclically conjugate words) 
are of the form $p=t_j^ia_j^{\pm 1}t^{-i}_j$. 
Therefore, $|p|/|r_i|\leq (2i+1)/(16i+8)=1/8<1/7$, and $|p|/|r_k|<|p|/|r_i|<1/7$, and this proves the first statement.

In order to show that no $r_i$ is a proper power, first observe that the elements $ t_1^ia_1t_1^{-i},t_2^{i}a_2t_2^{-i}, t_3^ia_3t_3^{-i},t_4^{i}a_4t_4^{-i},t_1,t_2,t_3,t_4$
provide a free basis of $F_8$ (in fact, they obviously generate $F_8$, which is Hopfian). Therefore, we have $r_i=[x,y]\cdot [z,t]$ for elements $x,y,z,t$ of a free basis of $F_8$.
Let us identify elements of $F_8$ with their reduced form with respect to this basis, and suppose by contradiction $r_i=h^n$ for some $n>1$. Since 
the first letter of $r_i$ is $x$, we have an equality of reduced words $h=x^m w x^{-m'}$ for some $m\geq 1$ (while possibly $m'=0$) and some word $w$ in $x,y,z,t$. We first
show that $m=m'$. In fact, if $m>m'$ (resp.~$m<m'$), then the reduced form of $r_i=h^n$ would contain at least $n$ occurrences of $x$ with a positive (resp.~negative) exponent, against
the fact that $r_i=[x,y]\cdot [z,t]$. Hence $m=m'$ and $r_i=h^n=x^mw' x^{-m}$, where $w'$ is the reduced form of $w^n$. In particular, the last letter
of the reduced form of $r_i$ should be equal to $x^{-1}$, against the fact that $r_i=[x,y]\cdot [z,t]$. This concludes the proof.
\end{proof}

Let us now describe the second cohomology group of $G$. We denote by $X$ the presentation complex associated to $\mathcal{P}$, that is, the 
CW-complex having exactly one vertex, one edge for each generator of $\mathcal{P}$, and one $2$-cell for each relation of $\mathcal{P}$ (with attaching map prescribed by
the corresponding relation). By construction we have $\pi_1(X)\cong G$. Moreover,
since the relators of the presentation $\mathcal{P}$ are not  proper powers in $F_8$, and $\mathcal{P}$ 
satisfies the $C'(1/7)$-cancellation property,  $X$ is aspherical (see e.g. \cite[Theorem 13.3]{Olshanskii-relations}), and we have a canonical isomorphism $H^2(G,\Z)\cong H^2_{\text{cell}}(X,\Z)$.

We compute the (co)homology of $X$ (hence, of $G$) via cellular (co)chains. Let $c_i$ be the cell
corresponding to the relation $r_i$. By construction, the boundary of $c_i$ is zero, hence $c_i$ is a cycle which defines a class $[c_i]\in H_2^{\text{cell}}(X,\Z)$. 
For every $\alpha\in H^2_{\text{cell}}(X,\Z)$ we set $\alpha_i=\langle\alpha,[c_i]\rangle$, where $$\langle\cdot,\cdot\rangle \colon H^2_{\text{cell}}(X,\Z)\times H_2^{\text{cell}}(X,\Z)\to\Z$$ denotes the Kronecker pairing.
We then set 
$$
\psi\colon H^2_{\text{cell}}(X,\Z)\to  \Z^\N\, ,\qquad \psi(\alpha)=(\alpha_0,\dots,\alpha_i,\dots )\ .
$$



Since $\partial c_i=0$ for every $i\in\mathbb{N}$, the space $C^2_{\text{cell}}(X,\Z)$ contains no non-trivial coboundaries (indeed, for every $\varphi\in C^1_{\text{cell}}(X,\Z)$
and every $i\in\mathbb{N}$ we have $\delta\varphi (c_i)=\varphi(\partial c_i)=0$, hence $\delta\varphi=0$). On the other hand,
since $X$ is $2$-dimensional, every $\Z$-valued map on the set of $2$-cells of $X$ is a cocycle. These facts immediately imply the following:

\begin{prop}\label{H2G}
The map $\psi\colon H^2_{\text{cell}}(X,\Z)\to \Z^\N$ is an isomorphism.
\end{prop}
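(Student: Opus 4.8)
The plan is to compute $H^2(G,\Z)$ directly from the presentation $\mathcal{P}$ using the small cancellation hypothesis established in Lemma~\ref{cancellation}. Since $\mathcal{P}$ is a $C'(1/7)$ presentation, the associated presentation complex $X$ (one $0$-cell, $8$ edges, and one $2$-cell $e_i$ for each relator $r_i$) is aspherical, so $X$ is a $K(G,1)$ and $H^*(G,\Z)\cong H^*(X,\Z)$. The cellular cochain complex of $X$ is
$$
0\longrightarrow C^0 \xrightarrow{\ \delta^0\ } C^1 \xrightarrow{\ \delta^1\ } C^2\longrightarrow 0,
$$
where $C^1\cong \Z^8$ (dual to the generators) and $C^2\cong \prod_{i\ge 0}\Z$ (dual to the relators, i.e.\ the full direct product since there are infinitely many $2$-cells and cochains are arbitrary functions on cells). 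Thus $H^2(G,\Z)\cong C^2/\im\delta^1 = \Z^\N/\im\delta^1$, and the matrix of $\delta^1\colon \Z^8\to \Z^\N$ has $(i,g)$-entry equal to the exponent sum of the generator $g$ in the relator $r_i$. The first key step is to observe that every $r_i$ is a product of commutators in the $a_j$'s conjugated by powers of the $t_j$'s, so the exponent sum of each generator in each $r_i$ is $0$; hence $\delta^1 = 0$ and $H^2(G,\Z)\cong \Z^\N$ as a group. This already identifies the target; what remains is to check that the isomorphism is realized by the specific map $\psi$.

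The second key step is to match $\psi$ with the natural identification $H^2(X,\Z)\cong \operatorname{Hom}(H_2(X,\Z),\Z)$ (using the universal coefficient theorem and the fact that $H_1(X,\Z)\cong\Z^8$ is free, so there is no $\operatorname{Ext}$ obstruction, and $H_2(X,\Z)\cong Z_2(X)=\ker\partial_2$). Because $\partial_2 = 0$ as well (same exponent-sum computation), $H_2(X,\Z)$ is the free abelian group on the classes $[e_i]$ of the $2$-cells, one for each relator. Now I must show that, under this identification, pairing a class $\alpha\in H^2(G,\Z)$ against $[e_i]$ gives precisely the integer $\alpha_i$ defined via $h_i^*(\alpha)=\alpha_i\beta$. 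For this, note that the map $h_i\colon\Gamma\to G$ sending $b_j\mapsto t^i a_j t^{-i}$ is realized by a cellular map $\bar h_i\colon S\to X$ from the standard genus-$2$ surface complex $S$ (one vertex, four edges $b_j$, one $2$-cell attached along $[b_1,b_2][b_3,b_4]$): the vertex goes to the vertex, the edge $b_j$ goes to the edge-path $t^i a_j t^{-i}$, and the $2$-cell maps to $e_i$ since the boundary word $[b_1,b_2][b_3,b_4]$ is sent exactly to $r_i$. Consequently $(\bar h_i)_*[S] = [e_i]$ in $H_2(X,\Z)$, where $[S]$ is the fundamental class corresponding to $\beta$ under $H^2(\Gamma,\Z)\cong H^2(S,\Z)$ and the identification $H^2(S,\Z)\cong\operatorname{Hom}(H_2(S,\Z),\Z)$. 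Hence $\langle\alpha,[e_i]\rangle = \langle\alpha,(\bar h_i)_*[S]\rangle = \langle h_i^*\alpha,[S]\rangle = \alpha_i$, which says exactly that $\psi$ is the composite of the UCT isomorphism $H^2(G,\Z)\xrightarrow{\sim}\operatorname{Hom}(H_2(G,\Z),\Z)$ with the evaluation isomorphism $\operatorname{Hom}\bigl(\bigoplus_i\Z[e_i],\Z\bigr)\xrightarrow{\sim}\prod_i\Z$. Therefore $\psi$ is an isomorphism.

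The main obstacle, and the place requiring genuine care, is the bookkeeping needed to verify that the cellular map $\bar h_i$ really sends the $2$-cell of $S$ onto $e_i$ with degree one — i.e.\ that substituting the edge-paths $b_j\mapsto t^i a_j t^{-i}$ into the word $[b_1,b_2][b_3,b_4]$ yields $r_i$ as a word in the generators, with the correct sign/orientation so that $(\bar h_i)_*[S]=+[e_i]$ (and not $-[e_i]$ or a multiple). Unwinding the definition of $r_i$ one sees $[t^i a_1 t^{-i}, t^i a_2 t^{-i}]\cdot[t^i a_3 t^{-i}, t^i a_4 t^{-i}]$ freely reduces to the stated $r_i$ after the $t^{-i}t^i$ cancellations in the middle; once this is checked the degree-one claim follows because the attaching map of $e_i$ is, by definition, this very word. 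A secondary point to be spelled out is the asphericity of $X$: for $C'(1/6)$ (hence $C'(1/7)$) presentations the presentation complex is aspherical by the Lyndon/Greendlinger theory (Diagram asphericity), so this can simply be cited. With these two facts in hand, the argument above goes through.
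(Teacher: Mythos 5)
Your proposal is correct and follows essentially the same route as the paper: use asphericity of the $C'(1/7)$ presentation complex $X$, compute cellular cohomology, observe that the coboundary map vanishes because each relator $r_i$ has zero exponent sum in every generator, obtain $H^2(G,\Z)\cong\Z^{\N}$, and then identify this with $\psi$. The paper dispatches the last step with ``it is now easy to check that $h_i^*(f)=f(c_i)\beta$''; you instead spell it out via the Universal Coefficient Theorem, the computation $H_2(X,\Z)=\bigoplus_i\Z[e_i]$ (using again that $\partial_2=0$ and that $H_1(X,\Z)\cong\Z^8$ is free), and a cellular map $S\to X$ sending the fundamental class to $[e_i]$. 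That is a perfectly valid, more explicit version of the same identification. One small inaccuracy worth fixing: the generators $t_1,\dots,t_4$ are distinct, so after substituting $b_j\mapsto t_j^{\pm i}a_jt_j^{\mp i}$ into $[b_1,b_2][b_3,b_4]$ there are no $t^{-i}t^{i}$ free cancellations ``in the middle''—the substituted word is already reduced and is literally the attaching word $r_i$ (with the signs of the conjugating powers chosen to match the paper's $r_i$). The degree-one claim is thus immediate rather than requiring a free-reduction step.
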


We have already observed that there is a canonical isomorphism $H^2(G,\Z)\cong H^2_{\text{cell}}(X,\Z)$. Henceforth, we will denote by the same symbol cohomology classes
which correspond to each other under this isomorphism.

\begin{prop}\label{boundedbounded}
Let $\alpha\in H^2(G,\Z)$ be bounded. Then the sequence $\psi(\alpha)\in \Z^\N$ is bounded.
\end{prop}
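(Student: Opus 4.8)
The plan is to combine two standard facts: pulling a bounded cocycle back along a group homomorphism yields a bounded cocycle with the same sup‑bound, and the genus‑$2$ surface $S$ admits an integral fundamental cycle of some fixed, finite $\ell^1$‑norm.

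Concretely, since $\alpha$ is bounded, I would fix a cocycle $\omega\in C^2(G,\Z)$ representing $\alpha$ with finite image, say $|\omega(g_1,g_2)|\le K$ for all $g_1,g_2\in G$. For each $i\in\N$ the pulled‑back cochain $h_i^*\omega=\omega\circ(h_i\times h_i)\in C^2(\Gamma,\Z)$ is again a cocycle, it represents $h_i^*\alpha=\alpha_i\beta$, and it still satisfies $|h_i^*\omega|\le K$ because $h_i$ is a homomorphism. Next I would fix, once and for all, a $2$‑cycle $z=\sum_k n_k\,[g_k|h_k]$ of the (inhomogeneous) bar complex of $\Gamma$ representing the fundamental class $[S]\in H_2(\Gamma,\Z)\cong H_2(S,\Z)$ — for instance the image of a simplicial fundamental cycle of a finite triangulation of $S$ under the standard comparison map — and set $L:=\sum_k|n_k|<\infty$. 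Since $\beta$ is by definition the generator dual to the orientation of $S$, the Kronecker pairing gives $\langle\beta,[S]\rangle=1$, and evaluating it on the representatives $h_i^*\omega$ and $z$ yields
$$
\alpha_i=\langle\alpha_i\beta,[S]\rangle=\langle h_i^*\alpha,[S]\rangle=\sum_k n_k\,\omega\big(h_i(g_k),h_i(h_k)\big),
$$
whence $|\alpha_i|\le KL$ for every $i$. Therefore $\psi(\alpha)=(\alpha_i)_{i\in\N}$ is bounded (by $KL$), which is exactly the claim.

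There is no real obstacle here: once a fundamental cycle of finite $\ell^1$‑norm for $S$ is in hand (which exists because closed surfaces are triangulable), the estimate is a one‑line computation. One could phrase the argument via seminorms instead — pullback along $h_i$ is non‑increasing for the $\ell^\infty$‑seminorm on $H^2_b$, this seminorm is homogeneous, and $\|\beta\|_\infty>0$ since $S$ has positive simplicial volume, so $|\alpha_i|\,\|\beta\|_\infty=\|h_i^*\alpha\|_\infty\le\|\alpha\|_\infty$ — but the cycle‑level computation keeps everything over $\Z$ and is more elementary. Note that it is precisely the converse implication (that $\psi(\alpha)$ bounded forces $\alpha$ bounded) that fails, and exhibiting a class with unbounded $\psi(\alpha)$ that is nevertheless weakly bounded is the heart of the construction in this section.
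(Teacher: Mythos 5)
Your main argument is correct, and it takes a genuinely different route from the paper's. The paper passes to real coefficients, invokes the duality between bounded cohomology and $\ell^1$-homology to compute $\|\beta_\R\|_\infty$ as the reciprocal of the simplicial volume of the genus-$2$ surface (namely $1/4$), and then uses that group homomorphisms are norm-nonincreasing on the $\ell^\infty$-seminorm of $H^2(-,\R)$, concluding $|\alpha_i| \le 4\|\alpha_\R\|_\infty$; the passage to $\R$ is essential there because the $\ell^\infty$-seminorm fails homogeneity over $\Z$, a point the paper flags explicitly. Your argument instead stays entirely over $\Z$: you pick one bounded integral representative $\omega$ of $\alpha$ and one finite bar cycle $z$ representing $[S]$, and obtain $|\alpha_i| = |\langle h_i^*\alpha, [S]\rangle| \le KL$ by direct evaluation. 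This is more elementary — it needs neither the duality theorem nor any knowledge of the simplicial volume of $S$, only that $S$ has a finite-$\ell^1$-norm integral fundamental cycle, which is immediate from triangulability and asphericity. The seminorm-based alternative you sketch at the end is essentially the paper's proof; if you run with it, be explicit that it requires real coefficients for the homogeneity $\|\lambda\zeta\|_\infty = |\lambda|\,\|\zeta\|_\infty$. One small caveat on your closing remark: what the section actually produces is a weakly bounded class $\alpha$ with $\psi(\alpha)$ unbounded (hence $\alpha$ not bounded by this proposition); it does not address whether bounded $\psi(\alpha)$ forces bounded $\alpha$, so calling the latter ``precisely the converse that fails'' overstates what is shown.
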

\begin{proof}
Let us denote by $\alpha_\R\in H^2(X,\R)$ the image of $\alpha$ under the change of coefficients map and the canonical isomorphism between cellular and singular homology.
The change of coefficients map $H^2(G,\Z)\to H^2(G,\R)$ is obviously norm non-increasing, and the canonical isomorphism $H^2(G,\R)\cong H^2(X,\R)$ is isometric
(see e.g.~\cite[Theorem 5.5]{frigerio:book}).
As a consequence, we have $\|\alpha_\R\|_\infty\leq \|\alpha\|_\infty$. 

For every $i\in\mathbb{N}$, the relation $r_i$ is the product of two commutators. As a consequence, there exists a 
map $j_i\colon \Sigma_2\to  X$ 
such that $j_i^*(\alpha_\R)=\alpha_i  [\Sigma_2]^*$, where $\Sigma_2$ is the genus-2 surface, and $[\Sigma_2]^*\in H^2(\Sigma_2,\R)$ is the real fundamental coclass of $\Sigma_2$.
A standard duality result between bounded cohomology and $\ell^1$-homology implies that the $\ell^\infty$-norm $\|[\Sigma_2]^*\|_\infty$ of
$[\Sigma_2]^*$ is the inverse of the simplicial volume of the closed surface of genus $2$, which is equal to $4$ (see e.g.~\cite[Proposition 7.10 and Section 8.12]{frigerio:book}). Moreover,
group homomorphisms induce norm non-increasing maps on cohomology. Therefore,
we have 
$$
|\alpha_i|=\frac{ \|j_i^*(\alpha_\R)\|_\infty}{\|[\Sigma_2]^*\|_\infty}=4\|j_i^*(\alpha_\R)\|_\infty\leq 4\|\alpha_\R\|_\infty\leq 4\|\alpha\|_\infty
$$
whence the conclusion. 
\end{proof}

In order to show that the group $G$ does not satisfy Property QITB, we now look for a weakly bounded class $\alpha\in H^2(G,\Z)$ such that
$\psi(\alpha)$ is not bounded. To this aim we first give the following:

\begin{defn}
We say that a class $\alpha\in H^2(G,\Z)$ with $\psi(\alpha)=(\alpha_i)_{i\in\mathbb{N}}$ is \emph{slow} if
$$
\limsup_{i\to +\infty} \frac{|\alpha_i|}{i}<+\infty\ .
$$
If $\alpha$ is slow, then we set
$$
\Lambda(\alpha)=\sup_{i\in\mathbb{N}} \frac{ |\alpha_i|}{2i+1} \ <\ +\infty\ .
$$
\end{defn}
The rest of this section is devoted to the proof of Theorem~\ref{slow}, which states that 
a class in $H^2(G,\Z)$ is weakly bounded if and only if it is slow.
Since there obviously exist slow classes $\alpha\in H^2(G,\Z)$ for which $\psi(\alpha)$ is not bounded (for example, one may take the class
$\alpha$ such that $\psi(\alpha)=(1,2,3,4,\dots)$ or such that $\psi(\alpha)=(\log 1,\log 2,\log 3,\dots)$), Propositions~\ref{boundedbounded} and Theorem~\ref{slow}
imply the existence of 
weakly bounded classes in $H^2(G,\Z)$ which are not bounded, thus showing that $G$ does not satisfy Property QITB.

It is well known that groups admitting a finite $C'(1/7)$  presentation have linear Dehn function, and are therefore word hyperbolic. 
We have shown above that $H^2(G,\Z)$ is not finitely generated, hence our group $G$ does not admit a finite presentation (thus, it cannot be hyperbolic). 
Nevertheless, we are now going to study
a suitably modified Dehn function for the presentation $\mathcal{P}$. 
We first  define a notion of area which takes the value $2i+1$ on the relation $r_i$.

\begin{defn}\label{area}
Let $w$ be a word in $N<F_8$. We then set
$$
A(w)=\min \left\{\sum_{l=1}^k (2i_l+1)\, \big|\, w=\prod_{l=1}^k w_lr_{i_l}^{\pm 1}w_l^{-1},\ k,i_l\in\mathbb{N},\ w_l\in F_8\right\}\ .
$$
\end{defn}

The proof of the following proposition provides a linear isoperimetric inequality  (with respect to our definition of area) for the infinite presentation $\mathcal{P}$.
 
\begin{prop}\label{isoperimetric}
For every $w\in N$,
$$
A(w)\leq {|w|}\ .
$$
\end{prop}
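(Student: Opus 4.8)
The plan is to prove the linear isoperimetric inequality $A(w) \le |w|$ by a van Kampen diagram argument adapted to the weighted notion of area, exploiting the $C'(1/7)$ small cancellation condition from Lemma~\ref{cancellation}. The key point is that Greendlinger's Lemma applies to infinite $C'(1/6)$ presentations just as well as to finite ones: if $w \in N$ is a nontrivial reduced word, then any reduced van Kampen diagram $D$ over $\mathcal{P}$ with boundary label $w$ contains a face $\Pi$ (a ``Greendlinger face'') whose boundary cycle shares with $\partial D$ a common subpath $p$ of length greater than $(1 - 3\lambda)|\partial\Pi| = \frac{4}{7}|\partial\Pi| > \frac{1}{2}|\partial\Pi|$. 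Here if $\Pi$ corresponds to the relator $r_i$, then $|\partial \Pi| = |r_i| = 16i + 8$, so $|p| > 8i + 4$.

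The heart of the argument is then an induction on $|w|$, with the inductive quantity being $A(w)$. First I would handle the trivial cases ($w$ empty, and $w$ a cyclic conjugate of a single $r_i^{\pm 1}$, where we need $A(w) \le |w|$, i.e.\ $2i+1 \le 16i+8$, which is clear). For the inductive step, take the Greendlinger face $\Pi \cong r_i^{\pm 1}$ and the subpath $p$ of $\partial D$ it ``exposes''. Write $\partial D$ (up to cyclic permutation) as $w = p \cdot q$ where $q$ is the complementary arc, and let $p'$ be the complementary arc in $\partial \Pi$, so that $\partial \Pi = p \cdot p'^{-1}$ read appropriately (as words in $F_8$). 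Then $w = p'  \cdot q$ in $N$ up to conjugation by the single relator $r_i^{\pm 1}$: more precisely, removing $\Pi$ from $D$ yields a smaller diagram exhibiting the word $w' := p' q$ (or its reduction) as an element of $N$, and $A(w) \le A(w') + (2i+1)$. Now $|w'| \le |p'| + |q|$, and since $|p| > 8i+4 = \frac{1}{2}|r_i|$ we get $|p'| = |r_i| - |p| < |r_i| - (8i+4) = 8i+4 < |p|$. Hence $|w'| \le |p'| + |q| < |p| + |q| = |w|$, so the inductive hypothesis gives $A(w') \le |w'|$. Combining,
\begin{align*}
A(w) \;\le\; A(w') + (2i+1) \;\le\; |w'| + (2i+1) \;<\; |p| - |p'| + |q| + (2i+1).
\end{align*}
The remaining task is to show $|p| - |p'| + (2i+1) \le |p|$, i.e.\ $|p'| \ge 2i+1$. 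This is where the specific form of the relators matters: a piece of $r_i$ has length at most $2i+1$ (as computed in Lemma~\ref{cancellation}, the longest pieces are of the form $t_j^i a_j^{\pm 1} t_j^{-i}$), and the complementary arc $p'$ of the long subpath $p$ is a concatenation of at least $16i+8 - (8i+4) = 8i+4$ letters, which for $i \ge 0$ certainly exceeds $2i+1$. So $|w'| + (2i+1) \le |p'| + |q| + (2i+1) \le |p| + |q| = |w|$ after using $|p'| \le$ something — let me redo this cleanly: from $|p| > \frac{1}{2}|r_i|$ we have $|p| \ge \frac{1}{2}|r_i| = 8i+4 \ge 2i+1$ (for $i \ge 0$), hence $|p| \ge (2i+1) + |p'|$ would need $|p'| \le |p| - (2i+1)$; instead, directly: $A(w) \le |w'| + (2i+1) \le |p'| + |q| + (2i+1)$ and we want this $\le |p| + |q| = |w|$, i.e.\ $|p'| + (2i+1) \le |p|$. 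Since $|p'| = |r_i| - |p| \le |r_i| - \lceil \frac{1}{2}|r_i| \rceil = 8i + 4 - $ (adjustment), and $|p| \ge 8i+4+1$ strictly, we get $|p'| \le 8i+3$ and $|p| \ge 8i+5$; then $|p'| + 2i+1 \le 10i + 4$, which is $\le |p|$ only when $8i + 5 \ge 10i+4$, i.e.\ $i \le 1/2$ — so this naive estimate fails for large $i$, and a more careful accounting is needed.

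The main obstacle, therefore, is precisely this arithmetic: the naive Greendlinger bound $|p| > \frac{1}{2}|r_i|$ is not by itself enough to beat the weight $2i+1$ when one only removes a single face. The fix is to run the induction more carefully, replacing ``remove one Greendlinger face'' with an argument that accounts for the full contribution along the boundary. Concretely, I expect one needs the stronger, more quantitative form of small-cancellation theory — either (a) Strebel's classification / the ``$W(6)$'' structure of reduced diagrams, tracking that each face either exposes a long boundary arc or is an interior face with enough perimeter, and summing $\sum_\Pi (2i_\Pi + 1)$ against $\sum_\Pi |\partial \Pi \cap \partial D|$ using that $|\partial\Pi \cap \partial D| \ge (1-3\lambda)|\partial\Pi| - (\text{interior correction})$; or (b) a direct combinatorial observation specific to this presentation: because the relator $r_i$ is built from the syllables $t_j^{\pm i} a_j^{\pm 1} t_j^{\mp i}$, any subword of $w$ that reads as a long subword of $r_i$ already ``costs'' roughly $2i$ letters of $w$ per unit of weight, matching the weight $2i+1$ up to the additive $+1$ which is absorbed by the slack $16i+8$ vs.\ $2i+1$. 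I would set up the induction so that the exposed arc $p$ — having length $> 8i+4$ — is itself decomposed into its maximal $t_j$-power syllables, observe that $p$ must contain at least (roughly) $8i+4$ letters while $r_i$ contributes weight only $2i+1$, and thereby charge the weight $2i+1$ to $2i+1 \le |p|$ letters of $w$ that disappear when we pass from $w$ to $w'$; the inequality $|w| - |w'| \ge |p| - |p'| \ge 2(2i+1) - \dots$ then closes the induction. The technical care is in making ``$w'$ is the reduction of $p' q$ and $|w'| \le |w| - |p| + |p'|$'' precise (free reductions can only help), and in verifying the syllable-length bound $|p| \ge $ enough, which is a finite check given the explicit relator.
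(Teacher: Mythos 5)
You have the right strategy --- Greendlinger face, remove it, induct on $|w|$ --- and you even quote the correct Greendlinger estimate $|p| > (1-3\lambda)\,|\partial\Pi| = \tfrac{4}{7}|\partial\Pi|$. But immediately afterwards you downgrade to the weaker $|p| > \tfrac{1}{2}|\partial\Pi|$ (your ``$|p| > 8i+4$'') and run the arithmetic with that, and this weakening is exactly why your induction fails to close: with only the $1/2$-bound the exposed arc $p$ and the complementary arc $p'$ can be nearly equal in length, so the saving $|p| - |p'|$ need not dominate the weight $2i+1$. Keeping the sharp bound fixes everything. If $\Pi$ is labelled by $r_i$ with $|r_i| = 16i+8$, then $|p| > \tfrac{4}{7}|r_i|$ and $|p'| = |r_i| - |p| < \tfrac{3}{7}|r_i|$, so
$$
|p| - |p'| \;>\; \tfrac{1}{7}|r_i| \;=\; \tfrac{16i+8}{7} \;=\; \tfrac{8}{7}(2i+1) \;>\; 2i+1\ .
$$
Your chain of inequalities then closes directly: $|w'| \le |p'| + |q| < |p| + |q| = |w|$, so the inductive hypothesis applies to $w'$, and
$$
A(w) \;\le\; A(w') + (2i+1) \;\le\; |w'| + (2i+1) \;\le\; |p'| + |q| + (2i+1) \;<\; |p| + |q| \;=\; |w|\ .
$$
The entire second half of your proposal --- Strebel's classification, syllable-length bookkeeping, charging weights to boundary letters --- is a red herring; the ``obstacle'' you found is an artifact of having replaced $4/7$ by $1/2$, not a genuine feature of the problem. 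Restoring the constant gives essentially the paper's proof: the paper phrases Greendlinger's Lemma directly on words rather than via van Kampen diagrams, writing $w = w_0 u$ and $r_i' = w_0 v$ for a cyclic conjugate $r_i'$ of $r_i$ with $|w_0| > \tfrac{4}{7}|r_i|$, then replacing $w$ by $v^{-1}u$ with $|v^{-1}u| \le |w| - |w_0| + |v| < |w| - \tfrac{1}{7}|r_i|$; this is the same computation, with your $p$, $p'$, $q$ playing the roles of $w_0$, $v$, $u$.
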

\begin{proof}
We prove the statement by induction on $|w|$, the case $|w|=0$ being obvious. Thus, let $|w|>0$. By the Greendlinger's Lemma, there exist a subword $w_0$ of $w$ 
and a cyclic permutation $r'_i$ of a relation $r_i$ such that $w_0$ is also an initial subword of $r_i'$, and $|w_0|>(1-3/7)|r_i|=(4/7)|r_i|$. 
 We thus have $r'_i=w_0v$, $w=uw_0z$, and 
 $|v|=|r'_i|-|w_0|<(3/7)|r'_i|$. We also have 
 $
 w=uw_0z=ur'_iv^{-1}z
 $, and, since $r'_i$ is conjugate to $r_i$ and $w$ is conjugate to $r'_iv^{-1}zu$, 
 $$
 A(w)=A(r'_iv^{-1}zu)\leq 2i+1+ A(v^{-1}zu)\ .
 $$
 But $$|v^{-1}zu|\leq |w|-|w_0|+|v|\leq |w|-(1/7)|r_i|=|w|-(16i+8)/7<|w|-2i-1\ ,$$
 hence $A(v^{-1}zu)\leq |w|-2i-1$ by our inductive hypothesis, and $A(w)\leq |w|$, as desired.
\end{proof}

It is proved in~\cite{Milizia} that, in the context of cellular cohomology with real coefficients, weakly bounded classes may be characterized as those classes which admit representatives satisfying a
linear isoperimetric inequality, in the following sense:

\begin{defn}
Let $\widetilde{X}$ be the universal covering of $X$, endowed with the induced cellular structure. 
For every $n\in\mathbb{N}$, let us endow the cellular chain module $C_n^{\text{cell}} (\widetilde{X},\R)$ with the $\ell^1$-norm defined by
$$\big\|\sum_{c\in S_n} a_c\cdot c\big\|_1=\sum_{c\in S_n} |a_c|\ ,$$ where $S_n$ is the set of $n$-cells of $\widetilde{X}$. 
Let $\alpha\in H^n_{\text{cell}}(X,\R)$. Then $\alpha$ satisfies a linear isoperimetric inequality if it admits a representative $z\in C^n_{\text{cell}}(X,\R)$ such that the following condition
holds:
if $\widetilde{z}\in C^n_{\text{cell}}(\widetilde{X},\R)$ is the pull-back of $z$ to $\widetilde{X}$, then
 there exists a constant $K\geq 0$  such that
$$
|z(c)|\leq K\cdot \|\partial c\|_1\qquad \text{for every}\ c\in C^{\text{cell}}_{n}(X,\R)\ .
$$
\end{defn}

\begin{prop}\label{Milizia:prop}
Let $G$ and $X$ be respectively the group and the presentation complex associated to the presentation $\mathcal{P}$ described in Subsection \ref{subsec:description}.

Let $\alpha\in H^2(G,\Z)$ and let $\alpha_\R\in H^2_{\text{cell}}(X,\R)$ be the corresponding class in the cellular homology of $X$ with real coefficients.
Then $\alpha$ is weakly bounded if and only if $\alpha_\R$ satisfies a linear isoperimetric inequality.
\end{prop}
\begin{proof}
By Lemma~\ref{ZtoR1}, the class $\alpha$ is weakly bounded if and only if the corresponding class $\alpha_\R$ in $H^2(G,\R)$ is so.
Moreover, Proposition~\ref{weakly:linf} shows that the set of weakly bounded classes in $H^2(G,\R)$ coincides with the kernel of
the map $\iota^2\colon H^2(G,\R)\to  H_{(\infty)}^*(G,\R)$
described in Section~\ref{linfty:sec}.
Now the conclusion follows from~\cite[Theorem 1.8]{Milizia}, together with the fact (also proved in~\cite{Milizia}) that 
 the following diagram is commutative:
\[\xymatrix{
    H^*(G,\R) \ar[r]^{\iota^*} \ar[d]^{\cong} & H_{(\infty)}^*(G,\R) \ar[d]^{\cong} \\
    H_{\text{cell}}^*(X,\R) \ar[r]^{\iota_X^*} & H_{(\infty)}^*(\widetilde{X},\R)\ .
}\]
\end{proof}

As explained above, the following theorem concludes the proof of Theorem~\ref{counterexample:thm} from the introduction.

\begin{thm}\label{slow}
Let $\alpha\in H^2(G,\Z)$. Then $\alpha$ is weakly bounded if and only if it is slow.
\end{thm}
\begin{proof}
By Proposition~\ref{Milizia:prop}, we need to show that the class $\alpha_\R\in H^2_{\text{cell}}(X,\R)$ corresponding to $\alpha$ 
satisfies a linear isoperimetric inequality if and only if $\Lambda (\alpha)<+\infty$. 

We have already observed that there are no coboundaries in $C^2_{\text{cell}}(X,\R)$, hence the class $\alpha_\R$ admits a unique representative
in $C^2_{\text{cell}}(X,\R)$, namely the function (still denoted by $\alpha_\R$) which takes the value $\alpha_i$ on the cell $c_i$. We denote by $\widetilde{\alpha}_\R$ the pull-back
of $\alpha_\R$ to $\widetilde{X}$. Moreover, for every $i\in\mathbb{N}$ we fix a lift $\widetilde{c}_i$ of $c_i$ to $\widetilde{X}$. 

Using that  the relation
$r_i$ has length $16i+8$, it is not difficult to show that, if $\widetilde{c}_i$ is a lift of $c_i$ to $\widetilde{X}$, then $\|\partial \widetilde{c}_i\|_1=16i+8$. Therefore,
if $\alpha_\R$ satisfies a linear isoperimetric inequality, then there exists $K\geq 0$ such that
$$
|\alpha_i|=|\alpha_\R (c_i)|=|\widetilde{\alpha}_\R(\widetilde{c}_i)|\leq K\cdot (16i+8)\ ,
$$
hence $\Lambda(\alpha)<+\infty$. 

Let us now suppose that $\alpha$ is slow, so that there exists $\Lambda\geq 0$ such that $|\alpha_i|\leq \Lambda (2i+1)$ for every $i\in\mathbb{N}$.
Let $\widetilde{c}\in C^2_{\text{cell}}(\widetilde{X},\R)$ be any chain. By~\cite[Theorem 3.3]{AlGe}, we have $\partial \widetilde{c}=\sum_{h=1}^k a_h \gamma_h$, 
where $a_h\in\R$ and $\gamma_h$ is (the integral chain corresponding to) a simple closed loop in the $1$-skeleton of $\widetilde{X}$. 
Moreover, the $\gamma_h$ are \emph{coherent} (according to the terminology of~\cite{AlGe}), which is equivalent to the fact that
 $\|\partial \widetilde{c}\|_1=\sum_{h=1}^k |a_h|\cdot  \|\gamma_h\|_1$.
  Simple closed loops in the 1-skeleton of $\widetilde{X}$
correspond to words in $N<F_8$, while (conjugates of) relations in $\mathcal{P}$ correspond
to (translates of) cells $\widetilde{c}_i$ in $\widetilde{X}$. Under this correspondence, Proposition~\ref{isoperimetric} may be restated as follows: for every 
$h=1,\dots, k$, we have $\gamma_h=\partial b_h$, where
$$
b_h=\sum_{j=1}^{l} g_{j}\cdot \widetilde{c}_{i_j}\ ,
$$
for some $g_1,\dots,g_{l}$ in  $G=\pi_1(X)\cong \text{Aut}(\widetilde{X})$, and $\sum_{j=1}^{l} (2i_j+1)\leq \|\gamma_h\|_1$.
As a consequence,  
$$
|\widetilde{\alpha}_\R(b_h)|=\big| \sum_{j=1}^{l} \widetilde{\alpha}_\R(g_{j}\cdot \widetilde{c}_{i_j}) \big|=\big| \sum_{j=1}^{l} \alpha_\R({c}_{i_j}) \big|\leq
\sum_{j=1}^l \Lambda (2i_j+1)\leq \Lambda \|\gamma_h\|_1\ .
$$
By construction, we have $\partial \widetilde{c}=\sum_{h=1}^k a_h\partial b_h$, thus the chain $\widetilde{c}-\sum_{h=1}^k a_h b_h$ is a cycle, hence a boundary (recall
that $X$ is aspherical, hence $\widetilde{X}$ is contractible). But 
$\widetilde{X}$ is a $2$-dimensional cellular complex, thus $\widetilde{c}=\sum_{h=1}^k a_h b_h$ and
$$
|\widetilde{\alpha}_\R(\widetilde{c})|=\big| \widetilde{\alpha}_\R \left(\sum_{h=1}^k a_h b_h\right)\big|\leq \sum_{h=1}^k |a_h|\cdot |\widetilde{\alpha}_\R(b_h)|\leq
\sum_{h=1}^k \Lambda |a_h|\cdot \|\gamma_h\|_1=
\Lambda \|\partial \widetilde{c}\|_1\ .
$$

This shows that $\alpha_\R$ satisfies a linear isoperimetric inequality, and concludes the proof of Theorem~\ref{slow}.
\end{proof}

%
%
%
%

\section{Groups with Property QITB}\label{families:sec}

The main goal of this section is to prove Theorem \ref{families}. We prove (1) in Corollary \ref{cor:amenable_QITB}, (2) in Theorem \ref{thm:rel_hyp_QITB}, (3) follows from Corollary \ref{**QITB} together with Remark \ref{rem:raag_toral}, and (4) is Theorem \ref{3manifolds}.

\subsection*{Amenable groups}
We are now ready to prove that amenable groups satisfy Property QITB.

\begin{prop}\label{amenable:prop}
Let $G$ be an amenable group, and let $\alpha\in H^n(G,\R)$
be weakly bounded. Then $\alpha=0$.
\end{prop}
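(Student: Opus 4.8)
The plan is to show that for an amenable group $G$, any weakly bounded class $\alpha \in H^n(G,\R)$ in fact vanishes, not merely that it is bounded. The key point is that bounded cohomology of amenable groups vanishes in positive degrees (with real coefficients), so it suffices to upgrade ``weakly bounded'' to ``bounded'' by an averaging argument that exploits amenability directly at the cochain level.

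First I would fix a weakly bounded normalized cocycle $\omega \in C^n(G,\R)$ representing $\alpha$. By definition, for each fixed $(g_2,\dots,g_n) \in G^{n-1}$ the function $g \mapsto \omega(g,g_2,\dots,g_n)$ is a bounded function of $g \in G$. The idea is to average $\omega$ over the first variable using an invariant mean $m$ on $\ell^\infty(G)$. Concretely, one checks that the cocycle relation lets one trade the dependence on the first variable for a coboundary: writing out $\delta\omega = 0$ in degree $n$, the term $\omega(g_1 g_2, g_3, \dots)$ and $\omega(g_1, g_2, \dots)$ differ, modulo the remaining terms which do not involve $g_1$ in the ``leading'' slot, by $\omega(g_2,\dots,g_{n+1})$. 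More precisely, I would define $f \in C^{n-1}(G,\R)$ by $f(g_2,\dots,g_n) = m_{g_1}\big(\omega(g_1,g_2,\dots,g_n)\big)$, which makes sense because the averaged function is bounded in $g_1$, and then compute $\delta f$ using the cocycle identity together with left-invariance of $m$. The outcome should be that $\omega - \delta f$ (up to sign) is a cocycle that no longer depends on the first variable at all; but a normalized $n$-cocycle independent of its first entry is easily seen to be zero (evaluate at $g_1 = 1$ and use normalization). Hence $\omega = \pm\delta f$ and $\alpha = 0$.

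The main obstacle I anticipate is bookkeeping in the cocycle identity: one must verify that all the terms of $\delta\omega = 0$ other than the two ``extreme'' ones have their $g_1$-dependence only through a single bounded slot, so that applying the invariant mean $m_{g_1}$ and using its invariance under the substitutions $g_1 \mapsto g_1 g_2$ and $g_1 \mapsto g_1^{-1}$ (or rather left translation) produces exactly the coboundary $\delta f$. A clean way to organize this is to use the homogeneous (rather than inhomogeneous) bar resolution, where the amenable averaging is the standard proof that $H^n_b(G,\R) = 0$, and simply observe that the same averaging only needs boundedness in \emph{one} variable to produce the needed primitive — this is the content of weak boundedness. Alternatively, one can invoke directly that $\omega$ weakly bounded means its class lies in the image of a suitable ``partially bounded'' cohomology which, for amenable $G$, is computed by the averaging operator and vanishes.

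As a sanity check, note this is consistent with the rest of the paper: Proposition~\ref{amenable:prop} immediately gives that amenable groups satisfy Property QITB (Corollary~\ref{cor:amenable_QITB} via Corollary~\ref{reali}), and the stronger vanishing statement — rather than mere boundedness — is exactly what is needed later to deduce that quasi-isometrically trivial extensions of amenable groups by torsion-free abelian groups are virtually trivial, since a vanishing real Euler class forces the integral Euler class to be torsion.
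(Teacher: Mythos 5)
Your proposal is essentially the paper's proof: fix a weakly bounded representative $\omega$, average over the first variable with an invariant mean $\mu$ to define $f(g_2,\dots,g_n)=\mu\bigl(g_1\mapsto\omega(g_1,g_2,\dots,g_n)\bigr)$, and use the cocycle identity to conclude that $\omega=\delta f$. Two small notes: the mean needs to be \emph{right}-invariant, since the substitution $g_1\mapsto g_1 g_2$ appearing when one isolates the $\omega(g_2,\dots,g_{n+1})$ term in $\delta\omega=0$ is right translation (you flip between ``left'' and ``right'' in the writeup); and the computation yields $\omega=\delta f$ outright, so the intermediate claim that $\omega-\delta f$ is a normalized cocycle independent of the first variable is true but unnecessary.
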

\begin{proof}
Let $\mu$ be a right-invariant mean on $\ell^\infty(G,\R)$.
Let $\omega\in C^n(G,\R)$ be a weakly bounded representative of $\alpha$, and define 
$f\in C^{n-1}(G,\R)$ as follows. For every $(g_2,\dots,g_{n})\in G^{n-1}$, the function
$$h_{(g_2,\dots,g_n)}\colon G\to \R\, ,\qquad h_{(g_2,\dots,g_n)}(g_1)=\omega(g_1,g_2,\dots,g_n)
$$
is bounded. We may thus set
$$
f(g_2,\dots, g_n)=\mu(h_{(g_2,\dots,g_n)})\qquad \text{for every}\ (g_2,\dots,g_n)\in G^{n-1}\ .
$$
For every $g\in G$ let us denote by $r_g\colon G\to G$ the right multiplication by $G$.
Since $\omega$ is a cocycle, for every $(g_2,\dots,g_{n+1})\in G^n$ the function
 $$h_{(g_3,\dots,g_{n+1})}\circ r_{g_2}-\left(\sum_{i=2}^{n+1} (-1)^i h_{(g_2,\dots, g_ig_{i+1},\dots,g_{n+1})}\right) 
-(-1)^{n+1} h_{(g_2,\dots,g_{n})}
$$
takes the constant value $\omega(g_2,\dots,g_{n+1})$ on $G$. Therefore, by using that $\mu$ is linear and right-invariant, we
have
\begin{align*}
& \omega(g_2,\dots,g_{n+1}) \\
=&\mu\left(h_{(g_3,\dots,g_{n+1})}\right)-\left(\sum_{i=2}^{n+1} (-1)^i \mu(h_{(g_2,\dots, g_ig_{i+1},\dots,g_{n+1})})\right) 
-(-1)^{n+1} \mu\left(h_{(g_2,\dots,g_{n})}\right)\\
=& \delta (f)(g_2,\dots,g_{n+1})\ .
\end{align*}
Thus $\omega$ is a coboundary, and $\alpha=0$, as desired.
\end{proof}

Putting together Corollary~\ref{ZtoR} and Proposition~\ref{amenable:prop} we get the following:

\begin{cor}\label{cor:amenable_QITB}
Every amenable group satisfies Property QITB.
\end{cor}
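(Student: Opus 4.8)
The plan is to deduce the statement directly from the two results just established, so there is essentially no new work to do. First I would invoke Corollary~\ref{reali}, which says that for a finitely generated group $G$, Property QITB is equivalent to the assertion that every weakly bounded class in $H^2(G,\R)$ is bounded. This reduces the problem entirely to a statement about real second cohomology, where bounded cohomology of amenable groups is well behaved.

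Next I would apply Proposition~\ref{amenable:prop} with $n=2$: since $G$ is amenable, any weakly bounded class $\alpha\in H^2(G,\R)$ is in fact the zero class. The zero class is trivially bounded (it is represented by the zero cocycle, which is bounded). Hence every weakly bounded class in $H^2(G,\R)$ is bounded, and Corollary~\ref{reali} then yields that $G$ satisfies Property QITB.

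There is no real obstacle here; all the content has been front-loaded into Corollary~\ref{reali} (which itself packages Proposition~\ref{ZntoZ}, Corollary~\ref{weak:fund:cor}, and the change-of-coefficients arguments of Lemma~\ref{ZtoR1} and Corollary~\ref{ZtoR}) and into the mean-averaging argument of Proposition~\ref{amenable:prop}. The only thing to be mildly careful about is that amenability is used here exactly through the vanishing in Proposition~\ref{amenable:prop}, which requires a right-invariant mean on $\ell^\infty(G,\R)$; this is available since $G$ is amenable. So the proof is a one-line combination of the two cited results.

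\begin{proof}
By Corollary~\ref{reali}, it suffices to show that every weakly bounded class in $H^2(G,\R)$ is bounded. But if $G$ is amenable, then Proposition~\ref{amenable:prop} (applied with $n=2$) implies that every weakly bounded class in $H^2(G,\R)$ is in fact zero, hence bounded. Therefore $G$ satisfies Property QITB.
\end{proof}
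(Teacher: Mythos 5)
Your proof is correct and matches the paper's argument exactly: the paper likewise obtains this corollary by combining Corollary~\ref{reali} (reduction to weakly bounded real classes) with Proposition~\ref{amenable:prop} (vanishing of weakly bounded real classes for amenable groups). Nothing to add.
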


\begin{rem}\label{amen:rem:ger}
Together with Proposition~\ref{fund:prop}, Proposition~\ref{amenable:prop} implies that, if $G$ is amenable, then the map
 $$
 \iota^n\colon H^n(G,\R)\to H^n_{(\infty)}(G,\R)
 $$
 is injective. 
This result was first proved in~\cite[Theorem 10.13]{gersten:preprint} under the assumption that $G$ admits an Eilenberg-MacLane model
with a finite $n$-skeleton, and by Wienhard~\cite[Proposition 5.3]{Wie} in the general case (see also~\cite[Section 6.3]{blank:thesis}).

In fact, Proposition~\ref{amenable:prop} could be deduced  from Proposition~\ref{fund:prop} and \cite[Proposition 5.3]{Wie}. 
We preferred to add here our proof of Proposition~\ref{amenable:prop} because it is very elementary and self-contained.
 \end{rem}

 \subsection*{Other examples}

 Let $G$ be a group. We say that a class  $\alpha\in H_2(G,\R)$ is \emph{amenable} if there exist an amenable group $A$ and a homomorphism
 $f\colon A\to G$ such that $\alpha$ lies in the image of $f_*\colon H_2(A,\R)\to H_2(G,\R)$.
The amenable classes generate a linear subspace of $H_2(G,\R)$ that we denote by $\ha(G,\R)$. It readily follows from the definitions that, if $f\colon G_1\to G_2$ is a homomorphism,
 then $$f_*(\ha(G_1,\R))\subseteq \ha(G_2,\R)\ .$$

\begin{rem}
It is well known that any element $\alpha \in H_2(G,\R)$ is represented by a surface, i.e.~that $\alpha=f_*(\beta)$ for some $\beta\in H_2(\Gamma_g,\R)$, where $\Gamma_g$ is the
fundamental group of the closed connected orientable surface of genus $g$, $g\geq 1$. In fact, one may define the \emph{genus} of a class $\alpha$ as the minimal 
$g\in\N$ such that $\alpha=f_*(\beta)$ for some $\beta\in H_2(\Gamma_g,\R)$. The class $\alpha$ is \emph{toral} if its genus is equal to or smaller than 1.

Observe that, if $K<\Gamma_g$ and $g\geq 2$, then $\ha(K,\R)=0$. In fact, if the index of $K$ in $\Gamma_g$ is infinite, then $K$ is free, and
$H_2(K,\R)=0$ (hence, a fortiori, $\ha(K,\R)=0$), while if the index of $K$ in $\Gamma_g$ is finite, then $K=\Gamma_{g'}$ for some $g'\geq g\geq 2$.
Since any amenable subgroup of $\Gamma_{g'}$ is either trivial of infinite cyclic, and the degree-2 homology with real coefficients of infinite cyclic groups  vanishes, 
also in this case we may conclude that $\ha(K,\R)=0$.
Building on this remark, one
may wonder whether amenable classes defined above should in fact be toral. However, it is shown in~\cite{BarGhys} that, for every $g\in\N$, there exist a nilpotent (hence, amenable) group $N$ and
a class $\alpha\in H_2(N,\R)$ which does not lie in the subspace generated by classes of $H_2(N,\R)$ with genus smaller than $g$.
\end{rem}

Recall that there exists a duality pairing
 $$
 \langle\cdot \, ,\, \cdot\rangle\colon H^2(G,\R)\times H_2(G,\R)\to \R\ .
 $$
We denote by $\ann(\ha(G,\R))\subseteq H^2(G,\R)$ the annihilator of $\ha(G,\R)$ in $H^2(G,\R)$, i.e.~the subspace of coclasses $\varphi\in H^2(G,\R)$ for which
$$
\langle \varphi,\alpha\rangle=0\qquad \forall\alpha\in\ha(G,\R)\ .
$$
Notice that $\varphi\in\ann(\ha(G,\R))$ if and only if $\langle \varphi,\alpha\rangle=0$ for all amenable classes $\alpha$ (amenable classes might form a proper subset of $\ha(G,\R)$).
If $f\colon G_1\to G_2$ is a homomorphism,
 then $$f^*(\ann(\ha(G_2,\R)))\subseteq \ann(\ha(G_1,\R))\ .$$

\begin{defn}
We say that a group $G$ has Property $(*)$ if $\ha(G,\R)=H_2(G,\R)$.  
\end{defn}

Of course, any amenable group has Property $(*)$. 
Interesting non-amenable examples are given by:
\begin{rem}\label{rem:raag_toral}
 If $G$ is a right-angled Artin group, then $G$ admits a classifying space (the Salvetti complex) whose $2$-skeleton is obtained by gluing tori. From this, one can deduce that $H_2(G,\R)$ is generated by toral classes, so that, in particular, we see that right-angled Artin groups have Property $(*)$. 
\end{rem}

\begin{defn}
We say that a group $G$ has Property $(**)$ if every class in $\ann(\ha(G,\R))$ is bounded, i.e.~$$\ann(\ha(G,\R))\subseteq c(H^2_b(G,\R))\ ,$$ where $c\colon H^2_b(G,\R)\to H^2(G,\R)$ is the comparison map.
 \end{defn}

 Of course, if a group $G$ has Property $(*)$ then it also has Property $(**)$. Property (**) is significant in our context due to the following:
 
 \begin{prop}\label{QITA}
 Let $\alpha\in H^2(G,\R)$ be weakly bounded. Then $\alpha\in \ann(\ha(G,\R))$.
\end{prop}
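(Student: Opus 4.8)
The plan is to show that a weakly bounded class $\alpha\in H^2(G,\R)$ pairs trivially with every \emph{amenable} class in $H_2(G,\R)$; since $\ha(G,\R)$ is the linear span of the amenable classes, this suffices. So fix an amenable group $A$ and a homomorphism $f\colon A\to G$; we must show $\langle\alpha, f_*(\gamma)\rangle=0$ for every $\gamma\in H_2(A,\R)$. By naturality of the duality pairing, $\langle\alpha, f_*(\gamma)\rangle=\langle f^*(\alpha),\gamma\rangle$, so it is enough to prove that $f^*(\alpha)=0$ in $H^2(A,\R)$. The key observation is that weak boundedness is preserved under pullback: if $\omega\in C^2(G,\R)$ is a weakly bounded cocycle representing $\alpha$, then $f^*\omega\in C^2(A,\R)$ is the cocycle $(a_1,a_2)\mapsto\omega(f(a_1),f(a_2))$, and for each fixed $a_2\in A$ the set $\{\omega(f(a_1),f(a_2)) : a_1\in A\}$ is contained in $\{\omega(g,f(a_2)) : g\in G\}$, which is bounded by hypothesis. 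Hence $f^*\omega$ is a weakly bounded cocycle representing $f^*(\alpha)$, so $f^*(\alpha)$ is weakly bounded.

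Now I invoke Proposition~\ref{amenable:prop}: since $A$ is amenable, every weakly bounded class in $H^*(A,\R)$ vanishes, and therefore $f^*(\alpha)=0$. Consequently $\langle\alpha, f_*(\gamma)\rangle=\langle f^*(\alpha),\gamma\rangle=0$ for all $\gamma\in H_2(A,\R)$, which shows $\langle\alpha,\cdot\rangle$ annihilates every amenable class. Since such classes span $\ha(G,\R)$ and the pairing is linear in the second variable, $\langle\alpha,\beta\rangle=0$ for all $\beta\in\ha(G,\R)$, i.e.\ $\alpha\in\ann(\ha(G,\R))$.

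The only point requiring a small amount of care is the passage from ``$\alpha$ weakly bounded'' to ``$f^*(\alpha)$ represented by a weakly bounded cocycle'': one should be slightly careful that pullback of a \emph{cocycle} representing a weakly bounded \emph{class} again represents the pulled-back class (immediate from functoriality of the bar resolution) and is itself weakly bounded (the containment of value-sets above). There is no genuine obstacle here — the argument is essentially a one-line naturality computation combined with the vanishing result for amenable groups — so I do not expect any serious difficulty; the main thing is simply to be explicit that weak boundedness, unlike boundedness, transfers along homomorphisms in the required direction. (Note that the argument does \emph{not} require $f$ injective, nor any finiteness hypothesis on $A$, matching the remark preceding the statement that amenable classes may form a proper subset of $\ha(G,\R)$.)
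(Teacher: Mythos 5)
Your proof is correct and takes essentially the same route as the paper: pair $\alpha$ against an amenable class $\beta = f_*(\beta_A)$, use naturality of the pairing to reduce to showing $f^*(\alpha)=0$, observe that pullback preserves weak boundedness, and invoke Proposition~\ref{amenable:prop} to conclude $f^*(\alpha)=0$. The only difference is that you spell out the (routine) verification that $f^*\omega$ is weakly bounded, which the paper states without elaboration.
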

   \begin{proof}
    Let $\alpha\in H^2(G,\R)$ be a weakly bounded class, 
  and let $\beta$ be an amenable class. 
Then there exist an amenable group $A$ and
 a homomorphism $f\colon A\to G$ such that $\beta=f_*(\beta_A)$ for some $\beta_A\in H_2(A,\R)$. Being the pull-back of a weakly bounded class,
 the element $f^*(\alpha)\in H^2(A,\R)$ is weakly bounded itself. Since $A$ is amenable, Proposition~\ref{amenable:prop} ensures that 
 $f^*(\alpha)=0$, hence
  $$
 \langle \alpha,\beta\rangle=\langle \alpha,f_*(\beta_A)\rangle=\langle f^*(\alpha),\beta_A \rangle=0\ . 
 $$
 We have thus shown that $\alpha$ belongs to $\ann(\ha(G,\R))$. 
   \end{proof}
   
   Proposition~\ref{QITA} and Corollary~\ref{ZtoR} readily imply the following:
   
 \begin{cor}\label{**QITB}
 Suppose $G$ has Property $(**)$. Then $G$ satisfies QITB.
  \end{cor}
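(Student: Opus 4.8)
The statement to prove is Corollary~\ref{**QITB}: if $G$ has Property $(**)$, then $G$ satisfies QITB.

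\medskip

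The plan is to combine the two ingredients that immediately precede the corollary in the text, namely Proposition~\ref{QITA} and the characterization of Property QITB in terms of weakly bounded real classes given by Corollary~\ref{reali}. Concretely, I would argue as follows. By Corollary~\ref{reali}, it suffices to show that every weakly bounded class $\alpha \in H^2(G,\R)$ is bounded. So fix such an $\alpha$. By Proposition~\ref{QITA}, weak boundedness of $\alpha$ forces $\alpha \in \ann(\ha(G,\R))$, i.e.\ $\alpha$ lies in the annihilator of the subspace of $H_2(G,\R)$ generated by amenable classes. But the hypothesis that $G$ has Property $(**)$ says precisely that $\ann(\ha(G,\R)) \subseteq c(H^2_b(G,\R))$, where $c$ is the comparison map; hence $\alpha$ is bounded. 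Since $\alpha$ was an arbitrary weakly bounded real class, condition (3) of Corollary~\ref{reali} holds, and therefore $G$ satisfies Property QITB.

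\medskip

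In short, the proof is a two-line chain of inclusions: \emph{weakly bounded} $\Rightarrow$ \emph{lies in $\ann(\ha(G,\R))$} (Proposition~\ref{QITA}) $\Rightarrow$ \emph{bounded} (Property $(**)$), followed by an appeal to Corollary~\ref{reali} to pass from ``every weakly bounded real $2$-class is bounded'' back to ``$G$ satisfies QITB''. There is essentially no obstacle here, since all the real work has been front-loaded into Proposition~\ref{QITA} (which handles the amenable-group vanishing via Proposition~\ref{amenable:prop}) and into Corollaries~\ref{ZtoR} and~\ref{reali} (which handle the passage between integral and real coefficients and between weak boundedness of Euler classes and quasi-isometric triviality). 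The only point worth stating carefully is that one must invoke Corollary~\ref{reali}, rather than arguing directly with integral coefficients, because Property $(**)$ is phrased with real coefficients; but this is exactly the role Corollary~\ref{reali} was set up to play. So I expect the proof to be at most a few sentences long, essentially just citing Proposition~\ref{QITA} and Corollary~\ref{reali} in sequence.
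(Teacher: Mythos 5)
Your proof is correct and is exactly the argument the paper has in mind: the paper simply states that Proposition~\ref{QITA} and Corollary~\ref{reali} readily imply the result, and you have spelled out precisely that chain (weakly bounded real class $\Rightarrow$ lies in $\ann(\ha(G,\R))$ by Proposition~\ref{QITA} $\Rightarrow$ bounded by Property~$(**)$, then pass back via Corollary~\ref{reali}).
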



\begin{thm}\label{thm:rel_hyp_QITB}
Let $G$ be relatively hyperbolic w.r.t. the finite collection of subgroups $\calH=\{H_1,\ldots,H_k\}$, and suppose that every
$H_i$ has Property $(*)$. Then $G$ has Property $(**)$.
\end{thm}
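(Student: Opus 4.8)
The plan is to reduce the statement about $\ann(\ha(G,\R))$ being bounded to a question about the relative cohomology of the pair $(G,\calH)$, and then use the fact that relative (bounded) cohomology of a relatively hyperbolic group is well-behaved together with the hypothesis on the peripheral subgroups. First I would recall the long exact sequence of the pair $(G,\calH)$ in ordinary cohomology,
$$
\xymatrix{
\cdots \ar[r] & H^2(G,\calH;\R) \ar[r]^-{p^*} & H^2(G,\R) \ar[r]^-{r^*} & \bigoplus_i H^2(H_i,\R) \ar[r] & \cdots,
}
$$
and the analogous sequence in bounded cohomology, related by comparison maps; these fit into a commutative ladder. The key input is a result of Mineyev--Yaman / Franceschini (or the version in the bounded cohomology literature, e.g.\ the work on relative bounded cohomology of relatively hyperbolic groups) stating that for a relatively hyperbolic pair $(G,\calH)$ the comparison map $H^2_b(G,\calH;\R)\to H^2(G,\calH;\R)$ is surjective — indeed relative hyperbolicity forces a linear homological isoperimetric inequality relative to the peripherals, which yields surjectivity of the relative comparison map in degree $2$.

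The second step is to identify $\ann(\ha(G,\R))$ inside $H^2(G,\R)$ in terms of this exact sequence. If $H_i$ has Property $(*)$, then $\ha(H_i,\R)=H_2(H_i,\R)$, so every class in $H_2(H_i,\R)$ is amenable; pushing forward under the inclusion $H_i\hookrightarrow G$, the image of $H_2(H_i,\R)\to H_2(G,\R)$ lies in $\ha(G,\R)$. Consequently, a class $\varphi\in H^2(G,\R)$ that annihilates $\ha(G,\R)$ in particular annihilates all these images, which by duality means $r^*(\varphi)=0$, i.e.\ $\varphi$ restricts to zero on each $H_i$. By exactness of the ordinary sequence, $\varphi=p^*(\bar\varphi)$ for some $\bar\varphi\in H^2(G,\calH;\R)$. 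Then I would invoke surjectivity of the relative comparison map to lift $\bar\varphi$ to a bounded relative class, whose image in $H^2_b(G,\R)$ maps to $\varphi$ under the comparison map; hence $\varphi$ is bounded. This shows $\ann(\ha(G,\R))\subseteq c(H^2_b(G,\R))$, which is exactly Property $(**)$.

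One subtlety to handle carefully: the argument above only uses that $\varphi$ kills the images of the peripheral homologies, not all of $\ha(G,\R)$, so the containment $\ann(\ha(G,\R))\subseteq p^*(H^2(G,\calH;\R))$ needs the duality pairing between $H^2(G,\R)$ and $H_2(G,\R)$ to interact correctly with the restriction maps — concretely, one uses that $\langle\varphi, (\iota_i)_*\beta\rangle = \langle\iota_i^*\varphi,\beta\rangle$ for $\iota_i\colon H_i\to G$, together with non-degeneracy of the pairing on the (finite-dimensional) peripheral pieces, to conclude $\iota_i^*\varphi=0$. Since the $H_i$ have Property $(*)$ they are in particular groups with finitely generated $H_2$ in the cases of interest, but even without that, one can argue directly with cocycles. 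The main obstacle I anticipate is citing (or re-proving) the surjectivity of the relative comparison map $H^2_b(G,\calH;\R)\to H^2(G,\calH;\R)$ in the right generality: this is the place where relative hyperbolicity is genuinely used, and it requires either a careful appeal to the literature on relative bounded cohomology of relatively hyperbolic groups or a hands-on argument producing a bounded relative primitive from the linear relative isoperimetric inequality. Everything else — the homological algebra of the pair and the identification of $\ann(\ha(G,\R))$ — should be routine.
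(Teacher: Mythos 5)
Your proposal is correct and follows essentially the same route as the paper: restrict a class in $\ann(\ha(G,\R))$ to the peripherals, use Property $(*)$ and universal coefficients to see the restriction vanishes, pull back through the relative exact sequence, and invoke surjectivity of the relative comparison map for relatively hyperbolic pairs (the paper cites Franceschini). The subtlety you flag about non-degeneracy of the pairing on $H_2(H_i,\R)$ is not actually an issue, since over $\R$ the Universal Coefficient Theorem gives $H^2(H_i,\R)\cong\mathrm{Hom}(H_2(H_i,\R),\R)$ with no finite-dimensionality hypothesis.
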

\begin{proof}
Let us consider the commutative diagram
$$
\xymatrix{
 H^2_b(G,\calH,\R) \ar[d]^{c_1}\ar[r]^{i_b}  & H^2_b(G,\R) \ar[d]^{c_2} & \\
 H^2(G,\calH,\R)\ar[r]^{i} &  H^2(G,\R) \ar[r]^(.4){j} & \bigoplus_{i=1}^k H^2(H_i,\R)\ ,	\\
 }
$$
where the bottom row is exact (see e.g.~\cite{Bieri}). 
Let $\alpha\in \ann(\ha(G,\R))$. Since group homomorphisms preserve the annihilators of amenable classes, $j(\alpha)\in \oplus_{i=1}^k\ann(\ha(H_i,\R))$. 
Since each $H_i$ has Property $(*)$, this means that $j(\alpha)=0$, hence $\alpha=i(\beta)$ for some $\beta\in H^2_b(G,\calH,\R)$. Now the comparison map in relative cohomology for relative hyperbolic pairs is surjective~\cite{Franceschini2}, hence $\beta=c_1(\eta)$ for some $\eta\in H^2_b(G,\calH,\R)$, and $\alpha=c_2(i_b(\eta))$ is a bounded class in $H^2(G,\R)$, as desired.
\end{proof}

\begin{cor}
Let $G$ be relatively hyperbolic w.r.t. the finite collection of subgroups $\calH=\{H_1,\ldots,H_k\}$, and suppose that every
$H_i$ is amenable. Then $G$ satisfies Property QITB.
\end{cor}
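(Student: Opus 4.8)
The plan is to derive this corollary directly from Theorem~\ref{thm:rel_hyp_QITB} together with Corollary~\ref{**QITB}. The key observation is that every amenable group has Property $(*)$: indeed, if $A$ is amenable, then taking $f=\mathrm{Id}\colon A\to A$ shows that every class in $H_2(A,\R)$ is amenable, so $\ha(A,\R)=H_2(A,\R)$ by definition. Thus the hypothesis of Theorem~\ref{thm:rel_hyp_QITB} is satisfied by the peripheral family $\calH=\{H_1,\ldots,H_k\}$.

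First I would invoke Theorem~\ref{thm:rel_hyp_QITB} with the observation above to conclude that $G$ has Property $(**)$. Then I would apply Corollary~\ref{**QITB}, which states that any group with Property $(**)$ satisfies QITB. Concatenating these two implications yields that $G$ satisfies Property QITB, which is exactly the statement. The whole argument is a two-line deduction.

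There is essentially no obstacle here: the corollary is a routine specialization of the preceding theorem, obtained by noting that amenable groups are the most basic examples of groups with Property $(*)$. The only thing worth spelling out is the trivial verification that amenability implies Property $(*)$, which follows immediately from the definition of an amenable class by taking the identity homomorphism. A concise write-up is the following.

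\begin{proof}
Every amenable group $A$ has Property $(*)$: taking $f=\mathrm{Id}\colon A\to A$ in the definition of amenable class shows that every element of $H_2(A,\R)$ is amenable, so $\ha(A,\R)=H_2(A,\R)$. In particular each $H_i$ has Property $(*)$, so Theorem~\ref{thm:rel_hyp_QITB} implies that $G$ has Property $(**)$. By Corollary~\ref{**QITB}, $G$ satisfies Property QITB.
\end{proof}
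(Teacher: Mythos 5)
Your proof is correct and follows exactly the intended route: the paper leaves this corollary without an explicit proof precisely because it is the immediate concatenation of Theorem~\ref{thm:rel_hyp_QITB} and Corollary~\ref{**QITB}, together with the observation (stated in the paper as ``Of course, any amenable group has Property $(*)$'') that amenability gives Property $(*)$. Your spelling out of the identity-homomorphism argument for that last point is the only nontrivial verification, and you have it right.
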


As already mentioned in the statement of Theorem~\ref{families}, the previous corollary implies that every hyperbolic group satisfies Property QITB. 
In order to prove this fact, however, there is no need to introduce the machinery above: indeed, Neumann and Reeves~\cite{NeuRee}  and Mineyev~\cite{Mineyev1} proved that, if $G$
is hyperbolic, then every class in $H^2(G,\mathbb{Z})$ is bounded, hence $G$ trivially satisfies Property QITB.

 \subsection*{Virtually trivial central extensions}

\begin{defn}\label{defn:virtually_trivial}
We say that a central extension $$\xymatrix {1\ar[r] &Z\ar[r] &E\ar[r]^{\pi} & G\ar[r] &1}$$ is \emph{virtually trivial} if there exists a finite-index subgroup $G'$ of $G$ such that the induced extension
$$\xymatrix {1\ar[r] &Z\ar[r] &\pi^{-1}(G')\ar[r]^{\pi} & G'\ar[r] &1}$$ is trivial.
\end{defn}

\begin{lemma}\label{lem:torsion_iff_VT}
 A central extension by a
finitely generated torsion-free abelian group is virtually trivial if and only if its Euler class has finite order.
\end{lemma}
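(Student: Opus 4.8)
The plan is to prove both implications of Lemma~\ref{lem:torsion_iff_VT} using the classification of central extensions by their Euler class in $H^2(G,Z)$, together with the standard behaviour of cohomology under restriction/transfer.

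\medskip

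\textbf{Virtually trivial $\Rightarrow$ finite order Euler class.} Suppose there is a finite-index subgroup $G'\leq G$ such that the restricted extension $1\to Z\to\pi^{-1}(G')\to G'\to 1$ is trivial. If $\omega\in H^2(G,Z)$ is the Euler class of the original extension, then the restricted extension has Euler class $\res^{G}_{G'}(\omega)\in H^2(G',Z)$, which is therefore zero. Applying the transfer (corestriction) map $\tr\colon H^2(G',Z)\to H^2(G,Z)$ and using $\tr\circ\res=[G:G']\cdot\mathrm{Id}$ on $H^2(G,Z)$, we get $[G:G']\cdot\omega=\tr(\res^G_{G'}(\omega))=0$. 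Hence $\omega$ has finite order (dividing $[G:G']$). Note this direction does not even use the torsion-freeness of $Z$.

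\medskip

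\textbf{Finite order Euler class $\Rightarrow$ virtually trivial.} Here we do use that $Z$ is finitely generated torsion-free, so $Z\cong\Z^k$. Suppose $n\cdot\omega=0$ for some $n\geq 1$. The idea is to pass to the finite-index subgroup realized as the kernel of a suitable homomorphism $G\to (\Z/n)^k$ and show the restriction of $\omega$ dies there. Concretely: multiplication by $n$ on $Z=\Z^k$ gives a short exact sequence $0\to Z\xrightarrow{\,n\,}Z\to Z/nZ\to 0$ of coefficient modules, inducing a long exact sequence $\cdots\to H^1(G,Z/nZ)\xrightarrow{\beta} H^2(G,Z)\xrightarrow{\,n\,} H^2(G,Z)\to\cdots$. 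Since $n\cdot\omega=0$, there is $\chi\in H^1(G,Z/nZ)=\mathrm{Hom}(G,(\Z/n)^k)$ with $\beta(\chi)=\omega$ (the Bockstein). Let $G'=\ker\chi$, a subgroup of index dividing $n^k$, hence finite. Restricting the long exact sequence to $G'$ and using naturality of the Bockstein, $\res^G_{G'}(\omega)=\beta'(\res^G_{G'}(\chi))$; but $\res^G_{G'}(\chi)=0$ since $G'=\ker\chi$, so $\res^G_{G'}(\omega)=0$. Thus the restricted extension has trivial Euler class, i.e.\ it is a trivial extension, and the original extension is virtually trivial.

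\medskip

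I expect the second implication to be the main point requiring care: one must set up the Bockstein sequence correctly and verify naturality under restriction, and one must use torsion-freeness of $Z$ precisely to identify $H^1(G,Z/nZ)$ with a group of homomorphisms whose kernel is finite index (if $Z$ had torsion one could not guarantee the relevant finite quotient, and indeed the statement is false by Remark~\ref{rem:Thompson}). The first implication is essentially formal given the transfer argument. I would present the two directions as above, being explicit that torsion-freeness enters only in the ``$\Leftarrow$'' direction.
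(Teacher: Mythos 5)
Your proof is correct and is essentially the paper's argument rephrased at the cohomology level: the Bockstein for $0\to Z\xrightarrow{\,n\,}Z\to Z/nZ\to 0$ is computed by lifting a $Z/nZ$-valued cocycle representing $\chi$ to a $Z$-valued $1$-cochain $f$ and forming $\delta f/n$, which is precisely the cochain manipulation the paper performs starting from $n\omega=\delta f$; likewise your transfer argument for the other direction matches the paper's. One small correction to your closing remark, though: torsion-freeness of $Z$ is not what identifies $H^1(G,Z/nZ)$ with $\mathrm{Hom}(G,Z/nZ)$ (that holds for any trivial coefficient module), nor what makes $\ker\chi$ finite index ($Z/nZ$ is finite for any finitely generated $Z$). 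It is needed so that multiplication by $n$ on $Z$ is injective, i.e.\ so that $0\to Z\xrightarrow{\,n\,}Z\to Z/nZ\to 0$ is short exact and the Bockstein into $H^2(G,Z)$ with image $\ker(\cdot\, n)$ exists. When $Z$ has $n$-torsion, multiplication by $n$ is not injective, this sequence is not short exact, and the implication indeed fails, as illustrated by Remark~\ref{rem:Thompson}.
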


\begin{proof}
Let 
$\xymatrix {1\ar[r] &Z\ar[r] &E\ar[r]^{\pi} & G\ar[r] &1}$ be a central extension with Euler class $\alpha\in H^2(G,Z)$.

Suppose first that $\alpha$ has  order $n\in\mathbb{N}$, $n>0$, and let
 $\omega$ be a representative of $\alpha$. Then $n\omega=\delta f$ for some $1$-cochain $f\in C^1(G,Z)$. 
 Thus $f$ defines a homomorphism $\widehat f\colon G\to Z/nZ$. Since the target group is finite, the kernel is a finite index subgroup $G'<G$.
 Take $g\in G'$. Then  $f(g)\in nZ$. Since $Z$ is torsion-free, there exists a unique element 
$h(g)=f(g)/n \in Z$, and we have $\omega=\delta h$ on $G'$.
  This implies that the restriction of $\omega$ to $G'$ is a coboundary, which in turn shows that the induced extension of $G'$ is trivial.
 
 Conversely, suppose that there exists a subgroup $G'$ of $G$ of index $n\in\mathbb{N}$, $n>0$, such that the induced extension of $G'$ is trivial. 
 Let $\text{res}\colon H^2(G,Z)\to H^2(G',Z)$ and $\text{trans}\colon H^2(G',Z)\to H^2(G,Z)$ be the restriction and the transfer map, respectively, and recall
 that $\text{trans}\circ \text{res}\colon H^2(G,Z)\to H^2(G,Z)$ is the multiplication by $n$ (see e.g.~\cite[Proposition 9.5]{brown}). We then have $\text{res}(\alpha)=0$, 
 whence
 $n\alpha=\text{trans}(\text{res}(\alpha))=0$, i.e.~$\alpha$ has finite order in $H^2(G,Z)$.
\end{proof}

\begin{lemma}\label{changecoeff}
Let $G$ be a finitely generated group, let $Z$ be a finitely generated abelian group, and let $j\colon H^2(G,Z)\to H^2(G,Z\otimes \R)$ be the 
change of coefficients map induced by the natural map $Z\to Z\otimes \R$. Then $\ker j$ is equal to the torsion subgroup of $H^2(G,Z)$.
\end{lemma}
\begin{proof}
Since $H^2(G,Z\otimes \R)$ is torsion-free, every element of $H^2(G,Z)$ of finite order lies in $	\ker j$. 

Let now $\alpha\in \ker j$. By looking at the commutative diagram
$$
\xymatrix{
 H^2(G,Z)\ar[r]^(.4){\varphi} \ar[d]^j&\text{Hom}_\Z (H_2(G,\Z),Z) \ar[d]\ar[d]^{j'}\\
 H^2(G,Z\otimes \R)\ar[r] &\text{Hom}_\Z (H_2(G,\Z),Z\otimes \R) 
}
$$
we obtain that $\varphi(\alpha)$ lies in $\ker j'$, and this readily implies that $\varphi(\alpha)$ has finite order, i.e.~there exists $n\in\mathbb{Z}\setminus\{0\}$
such that $\varphi(n\alpha)=0$. Indeed, for every $c\in H_2(G,\Z)$ we have that $\phi(\alpha)(c)$ is an element of the kernel of the map $Z\mapsto Z\otimes \R$, and there exists some (uniform) $n$ such that for any such element $z$ we have $nz=0$.

 By the Universal Coefficient Theorem for cohomology we now have the exact sequence
 $$
0 \to \text{Ext}^1(H_1(G,\Z),Z)\to H^2(G,Z)\longrightarrow\text{Hom}_\Z (H_2(G,\Z),Z)\to 0
$$
Since $G$ and $Z$ are finitely generated, $\text{Ext}^1(H_1(G,\Z),Z)$ is a torsion group. Thus $n\alpha$, which belongs to the (the image of) $\text{Ext}^1(H_1(G,\Z),Z)$, has finite order, and hence $\alpha$ has finite order as well.
\end{proof}

Since amenable groups and right-angled Artin groups satisfy Property~(*), the following result implies Theorem~\ref{amenable:vt} from the introduction.

\begin{thm}\label{*virt}
 Suppose that the group $G$ satisfies Property (*). Then a central extension of $G$ by the
finitely generated abelian group $Z$ is quasi-isometrically trivial if and only if its Euler class has finite order. Moreover, if $Z$ is torsion-free then this happens if and only if the extension
is virtually trivial.
\end{thm}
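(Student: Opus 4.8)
The plan is to reduce the theorem to facts already established above. By Lemma~\ref{lem:torsion_iff_VT}, for a torsion-free $Z$ the condition ``Euler class of finite order'' is equivalent to ``virtually trivial'', so the second sentence of the theorem follows at once from the first; hence it suffices to prove that a central extension of $G$ by a finitely generated abelian group $Z$ is quasi-isometrically trivial if and only if its Euler class $\alpha\in H^2(G,Z)$ has finite order. By Corollary~\ref{weak:fund:cor} this amounts to showing that, under Property~$(*)$, $\alpha$ is weakly bounded if and only if it has finite order. Writing $Z\cong\Z^k\oplus T$ with $T$ the finite torsion subgroup, one has $H^2(G,Z)\cong H^2(G,\Z)^k\oplus H^2(G,T)$; the $H^2(G,T)$-component is automatically bounded (every $T$-valued cochain is bounded) and automatically of finite order (it is killed by $|T|$), and a weakly bounded representative of $\alpha$ exists if and only if one exists for each $\Z$-valued component, by the same argument as in the proof of Proposition~\ref{ZntoZ}. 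So it is enough to treat the case $Z=\Z$.

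For the ``finite order $\Rightarrow$ quasi-isometrically trivial'' direction --- which does not use Property~$(*)$ --- I would note that if $\alpha\in H^2(G,\Z)$ has finite order then its image $\alpha_\R\in H^2(G,\R)$ vanishes, since $H^2(G,\R)$ is a real vector space. In particular $\alpha_\R$ is bounded, so $\alpha$ is bounded by Lemma~\ref{ZtoR1}(2), hence weakly bounded, and the extension is quasi-isometrically trivial by Corollary~\ref{weak:fund:cor}.

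The interesting direction is ``quasi-isometrically trivial $\Rightarrow$ finite order''. Here I would argue as follows: if $\alpha$ is weakly bounded then $\alpha_\R\in H^2(G,\R)$ is weakly bounded by Lemma~\ref{ZtoR1}(1), so Proposition~\ref{QITA} places $\alpha_\R$ in $\ann(\ha(G,\R))$. Property~$(*)$ gives $\ha(G,\R)=H_2(G,\R)$, so $\alpha_\R$ annihilates all of $H_2(G,\R)$; since over a field one has a canonical identification $H^2(G,\R)\cong\operatorname{Hom}_\R(H_2(G,\R),\R)$ under which the Kronecker pairing is evaluation, this forces $\alpha_\R=0$. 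It then remains to upgrade ``$\alpha_\R=0$'' to ``$\alpha$ has finite order'', and this is where finite generation of $G$ enters: $H_1(G,\Z)$ is then finitely generated, so by the universal coefficient theorem the kernel of the change-of-coefficients map $H^2(G,\Z)\to H^2(G,\R)$ --- which is the image of $\operatorname{Ext}^1_\Z(H_1(G,\Z),\Z)$ --- is a finite group, whence $\alpha_\R=0$ implies that $\alpha$ is torsion.

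I do not expect a genuine obstacle here: every step is a short deduction from a result proved earlier, and the conceptual content is simply that Property~$(*)$ collapses $\ann(\ha(G,\R))$ to $0$, so that the inclusion supplied by Proposition~\ref{QITA} becomes the vanishing of weakly bounded real $2$-classes. The only slightly delicate points are the equivalence ``$\alpha_\R=0\iff\alpha$ has finite order'' --- which genuinely uses that $G$ is finitely generated, since for an infinitely generated group the map $H^2(\,\cdot\,,\Z)\to H^2(\,\cdot\,,\R)$ may have a torsion-free kernel --- and the (routine) bookkeeping needed to carry the torsion subgroup $T\leq Z$ through the reduction to the case $Z=\Z$.
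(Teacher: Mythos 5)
Your proposal is correct and follows essentially the same route as the paper's proof: the heart of both arguments is that Property~$(*)$ together with Proposition~\ref{QITA} and the Universal Coefficient Theorem forces weakly bounded real classes to vanish, after which Lemma~\ref{ZtoR1}, the change-of-coefficients map, and Lemma~\ref{lem:torsion_iff_VT} finish the job. The only cosmetic difference is that you reduce explicitly to $Z=\Z$ before passing to real coefficients, whereas the paper keeps a general finitely generated $Z$ and works with $Z\otimes\R\cong\R^k$ directly, identifying the torsion subgroup of $H^2(G,Z)$ as $\ker\bigl(H^2(G,Z)\to H^2(G,Z\otimes\R)\bigr)$ in one step; both variants use the finite generation of $G$ (hence of $H_1(G,\Z)$) at exactly the same spot.
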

\begin{proof}
Let $Z$ be any finitely generated abelian group. We first prove that a class in $H^2(G,Z)$ is bounded if and only if it has finite order.

By Proposition~\ref{QITA}, any bounded class $\alpha\in H^2(G,\R)$ vanishes on $\ha(G,\R)=H_2(G,\R)$. By the Universal Coefficient Theorem, this implies that $\alpha=0$. Therefore,
the comparison map $c^2\colon H^2_b(G,\R)\to H^2(G,\R)$ is null.  Observe now that $Z\cong \Z^k\oplus F$, where $F$ is finite. Then $Z\otimes \R\cong \R^k$, thus the comparison map
$H^2_b(G,Z\otimes \R)\to H^2(G,Z\otimes \R)$ is null. By looking at the commutative diagram 
$$
\xymatrix{ H^2_b(G,Z)\ar[r] \ar[d] & H^2(G,Z)\ar[d]^j\\
H^2_b(G,Z\otimes \R)\ar[r]  & H^2(G,Z\otimes \R)}
$$
we can then deduce that every bounded class in $H^2(G,Z)$ is contained in $\ker j$. By Lemma~\ref{changecoeff}, we conclude that 
bounded classes have finite order in $H^2(G,Z)$.

On the other hand, if $\alpha\in H^2(G,Z)$ has finite order, then $j(\alpha)=0$ in $H^2(G,Z\otimes \R)$. The very same argument as in the proof of Lemma \ref{ZtoR1}
now shows that $\alpha$ is bounded. 

We have thus shown that  a class in $H^2(G,Z)$ is bounded if and only if it has finite order. By Corollary~\ref{**QITB}, this implies that a central extension of
$G$ by $Z$ is quasi-isometrically trivial if and only if its Euler class has finite order. We conclude applying Lemma \ref{lem:torsion_iff_VT}.
\end{proof}

\begin{rem}\label{rem:Thompson}
Lemma~\ref{lem:torsion_iff_VT} (and Theorem~\ref{*virt}) cannot hold in general for extensions by finitely generated abelian groups with torsion. For example, let us consider Thompson's group $T$. It is shown in~\cite{GS} that $H^1(T,\Z)=0$, $H^2(T,\Z)=\Z^2$. If $n>1$ is any integer, we then deduce from the Universal Coefficient Theorem
that $H^2(T,\Z_n)=\Z_n^2$. In particular, there exists a non-trivial class $\alpha\in H^2(T,\Z_n)$. This class has obviously finite order. Nevertheless, the 
unique finite-index subgroup of $T$ is $T$ itself, thus $\alpha$ does not vanish on any finite-index subgroup of $T$. The central extension of $T$ by $\Z_n$ with Euler class $\alpha$ is
quasi-isometrically trivial (since its Euler class is obviously bounded), but not virtually trivial. 
\end{rem}

\subsection*{(Amalgamated) products of groups with Property QITB} 
We now prove Propositions~\ref{direct:prop} and~\ref{amalgamated:prop} from the introduction.

\begin{varthm}[Proposition \ref{direct:prop}.]
Let $G_1$, $G_2$ be groups satisfying Property QITB. Then
the direct product $G_1\times G_2$ satisfies Property QITB.
\end{varthm}

\begin{proof}
As usual, we prove that every weakly bounded class in $H^2(G,\Z)$ is bounded, under the assumption
that the same condition holds in $H^2(G_i,\Z)$, $i=1,2$.


Let  $\alpha\in H^2(G,\Z)$ be weakly bounded, and denote by $p_i\colon G\to G_i$  the projection, and by $j_i\colon G_i\to G_1\times G_2$
the inclusion. By the K\"unneth formula, we have
$$
H_2(G_1\times G_2,\Z)\cong (H_1(G_1,\Z)\otimes H_1(G_2,\Z))\oplus H_2(G_1,\Z)\oplus H_2(G_2,\Z)\ .
$$
More precisely, any element  $\beta\in H_2(G_1\times G_2,\Z)$ may be decomposed as
$$
\beta=\overline{\beta}+(j_1)_*(\beta_1)+(j_2)_*(\beta_2)\ ,
$$
where $\overline{\beta}$ corresponds to a class in $H_1(G_1,\Z)\otimes H_1(G_2,\Z)$ under the above identification, and
$\beta_i\in H_2(G_i,\mathbb{Z})$ for $i=1,2$. 
Let now $\alpha_\R\in H^2(G_1\times G_2,\R)$, $\overline{\beta}_\R\in H_2(G_1\times G_2,\R)$ be the classes corresponding
to $\alpha$, $\overline{\beta}$ under the change of coefficients homomorphism (thus, $\alpha_\R$ is also weakly bounded). 

Since any class in $H_1(G_i,\Z)$ is the pushforward of a class in $H_1(\Z,\Z)$ via some homomorphism $f_i\colon \Z\to G_i$, the class
$\overline{\beta}$ is the push-forward of an element of $H_2(\Z\times \Z,\Z)$ via a homomorphism $\Z\times \Z\to G_1\times G_2$ of the form $f_1\times f_2$. As a consequence, 
$\overline{\beta}_\R$ is amenable, and $(p_i)_*(\overline{\beta})=0$ for $i=1,2$. Since $\alpha_\R$ is weakly bounded, by 
Proposition~\ref{QITA} we thus get $\langle\alpha,\overline{\beta}\rangle=\langle \alpha_\R,\overline{\beta}_\R\rangle=0$. Moreover, for $i=1,2$ we have
$\langle p_i^*(j_i^*(\alpha)),\overline{\beta}\rangle=\langle j_i^*(\alpha), (p_i)_*(\overline{\beta})\rangle=0$.

Let us now set  $$\alpha'=\alpha-p_1^*(j_1^*(\alpha))-p_2^*(j_2^*(\alpha))\ .$$ 
We will prove  that $\alpha'$ vanishes on $\beta$, by showing that it vanishes
on its summands $\overline{\beta}$, $(j_1)_*(\beta_1)$ and $(j_2)_*(\beta_2)$.
 
The above computations show that
$$
\langle \alpha',\overline{\beta}\rangle=\langle \alpha,\overline{\beta}\rangle-\langle p_1^*(j_1^*(\alpha)), \overline{\beta}\rangle -\langle p_2^*(j_2^*(\alpha)), \overline{\beta}\rangle=0\ .
$$
Moreover, we have
\begin{align*}
\langle \alpha',(j_1)_*(\beta_1)\rangle&=\langle \alpha,(j_1)_*(\beta_1)\rangle-\langle p_1^*(j_1^*(\alpha)), (j_1)_*(\beta_1)\rangle -\langle p_2^*(j_2^*(\alpha)),(j_1)_*(\beta_1)\rangle\\
&=\langle j_1^*(\alpha), \beta_1\rangle-\langle j_1^*(\alpha), (p_1\circ  j_1)_*(\beta_1)\rangle -\langle j_2^*(\alpha),(p_2\circ j_1)_*(\beta_1)\rangle\\
&=\langle j_1^*(\alpha), \beta_1\rangle-\langle j_1^*(\alpha), \beta_1\rangle-\langle j_2^*(\alpha),0\rangle = 0\ ,
\end{align*}
and a similar computation implies that $\langle \alpha',(j_2)_*(\beta_2)\rangle=0$ too.

 We have thus shown that $\alpha=p_1^*(j_1^*(\alpha))+p_2^*(j_2^*(\alpha))+\alpha'$, where $\alpha'\in H^2(G,\Z)$ is such that $\langle \alpha',\beta\rangle=0$ for
every $\beta\in H_2(G,\Z)$. Since $\alpha$ is weakly bounded, $j_i^*(\alpha)\in H^2(G_i,\Z)$ is also weakly bounded, for $i=1,2$.
But $G_i$ satisfies Property QITB, hence $j_i^*(\alpha)$ is bounded for $i=1,2$, and also $p_1^*(j_1^*(\alpha))+p_2^*(j_2^*(\alpha))$ is bounded.

In order to conclude, it is thus sufficient to show that $\alpha'$ is also bounded. However, 
since $\langle \alpha',\beta\rangle=0$ for
every $\beta\in H_2(G,\Z)$, the image of $\alpha'$ in $H^2(G,\R)$ is trivial, hence $\alpha'$ 
is bounded by Lemma~\ref{ZtoR1}. 
\end{proof}

 \begin{defn}\label{transverse:defn}
 An amalgamated product $G=G_1*_H G_2$ is \emph{transverse}
 if, denoting
 $i_1\colon H\to G_1$ and $i_2\colon H\to  G_2$ the inclusions defining the amalgamated product, 
   the map
 $$
 (i_1)_*\oplus (i_2)_*\colon H_1(H,\R)\to H_1(G_1\,\R)\oplus H_1(G_2,\R)$$ 
 is injective.
 \end{defn}

 \begin{varthm}[Proposition \ref{amalgamated:prop}]
Let $G=G_1*_H G_2$ be a \emph{transverse} amalgamated product, where $H$ is amenable. If $G_1,G_2$ satisfy Property QITB, then
$G$ satisfies Property QITB.
\end{varthm}
  \begin{proof}
  Let $\alpha\in H^2(G,\Z)$ be weakly bounded, and denote by $\alpha_\R\in H^2(G,\R)$ the image of $\alpha$ via the change of coefficients map. Then $\alpha_\R$
  is weakly bounded, and by Lemma~\ref{ZtoR1} we are left to show that $\alpha_\R$ is bounded.
  
By definition of transverse amalgamated product, the map $H_1(H,\R)\to H_1(G_1,\R)\oplus H_1(G_2,\R)$ is injective. By looking at the Mayer-Vietoris sequence for the triple $G_1,G_2,H$, one can then
deduce that the map $H_2(G_1,\R)\oplus H_2(G_2,\R)\to H_2(G,\R)$ is surjective, which implies in turn that the restriction map
$r\colon H^2(G,\R)\to H^2(G_1,\R)\oplus H^2(G_2,\R)$ is injective.

Let us now consider the commutative diagram
$$
\xymatrix{
 H^2_b(G,\R)\ar[rr]^(.37){r_b}\ar[d]^c & & H^2_b(G_1,\R)\oplus H^2_b(G_2,\R)\ar[d]^{c'} \\
H^2(G,\R)\ar[rr]^(.37){r} & & H^2(G_1,\R)\oplus H^2(G_2,\R)\ .
}
$$
Since $\alpha$ is weakly bounded, its restrictions in $H^2(G_1,\Z)$ and in $H^2(G_2,\Z)$ are also weakly bounded, hence bounded, since $G_1$ and $G_2$ satisfy QITB.
Together with the fact that the change of coefficients map takes bounded cochains to bounded cochains (and commutes with restrictions), 
this implies that $r(\alpha_\R)$ lies in the image of the comparison map $c'$. Since $H$ is amenable, the map $r_b$ is surjective (see~\cite{BBFIPP}), hence there exists
$\beta\in  H^2_b(G,\R)$ such that $c'(r_b(\beta))=r(\alpha_\R)$. Using that $r$ is injective we then get $c(\beta)=\alpha_\R$, i.e.~$\alpha_\R$ lies in the image of the comparison map.
This concludes the proof.
 \end{proof}
 
\begin{rem}
Let $G_1,G_2$, $H$ and $G$ be as in the statements of the previous propositions. The proofs above may be easily adapted to show the following:
\begin{enumerate}
\item If both $G_1$ and $G_2$ have Property $(*)$ (resp. $(**)$), then $G_1\times G_2$ has Property $(*)$.
\item If both $G_1$ and $G_2$ have Property $(*)$ (resp. $(**)$), then $G_1*_H G_2$ has Property $(*)$ (resp. $(**)$).
\end{enumerate}
\end{rem}

\subsection*{$3$-manifold groups}

In order to prove that $3$-manifold groups also satisfy Property QITB we first recall that $C_2(G,\R)$ is endowed with an $\ell^1$-norm such that $\|c\|_1=\sum |a_{(g_1,g_2)}|$ for every 
 chain $c=\sum a_{(g_1,g_2)} (g_1,g_2)$. We then endow 
 $H_2(G,\R)$ with the induced quotient $\ell^1$ seminorm  (which is sometimes called the \emph{Gromov seminorm}) such that, if $\beta\in H_2(G,\R)$, then
$\|\beta\|_1$ is the infimum of the $\ell^1$-norms of the representatives of $\beta$ in $C_2(G,\R)$.
Let us denote by $N_2(G,\R)$ the subspace of $H_2(G,\R)$ given by classes with vanishing $\ell^1$-seminorm. 
It is well known that, if $A$ is an amenable group, then the $\ell^1$-seminorm vanishes on $H_2(A,\R)$. Since group homomorphisms induce seminorm non-increasing
maps on homology, this readily implies that $\ha(G,\R)\subseteq N_2(G,\R)$.

\begin{prop}\label{criterio1}
 Let $G$ be a group such that $H_2(G,\R)$ is finite dimensional (this is the case, e.g., if $G$ is finitely presented). Then $G$ satisfies $(**)$ if and only if 
 $N_2(G,\R)=\ha(G,\R)$.
 \end{prop}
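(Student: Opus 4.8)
The plan is to prove the equivalence by relating boundedness of a class to the vanishing of its pairing with zero-norm homology classes, using the duality between bounded cohomology and the $\ell^1$-seminorm on homology. Throughout I assume $H_2(G,\R)$ is finite-dimensional, which is what makes the annihilator picture rigid.

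First I would recall the basic duality principle: a class $\varphi \in H^2(G,\R)$ is bounded if and only if $\langle \varphi, \beta\rangle = 0$ for every $\beta \in N_2(G,\R)$, i.e.\ for every homology class of vanishing $\ell^1$-seminorm. The nontrivial direction is that vanishing on $N_2(G,\R)$ forces boundedness; this is where finite-dimensionality of $H_2(G,\R)$ enters. Indeed, when $H_2(G,\R)$ is finite-dimensional one can argue as follows: the seminorm $\|\cdot\|_1$ descends to a genuine norm on the finite-dimensional quotient $H_2(G,\R)/N_2(G,\R)$, and by Hahn--Banach any linear functional on $H_2(G,\R)$ vanishing on $N_2(G,\R)$ is bounded with respect to $\|\cdot\|_1$; such a functional is, by the duality between $H^2_b(G,\R)$ and the $\ell^1$-homology of $G$ (as in~\cite{frigerio:book}), represented by a bounded cocycle, hence defines a bounded class in $H^2(G,\R)$. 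In other words, in the finite-dimensional setting, $c(H^2_b(G,\R))$ is exactly the annihilator $\ann(N_2(G,\R))$ of $N_2(G,\R)$ in $H^2(G,\R)$.

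Next I would combine this with the inclusion $\ha(G,\R) \subseteq N_2(G,\R)$ noted just before the proposition (which follows from $\ell^1$-seminorm vanishing on homology of amenable groups and the fact that homomorphisms are seminorm non-increasing). Taking annihilators reverses the inclusion, so
$$
\ann(N_2(G,\R)) \subseteq \ann(\ha(G,\R)).
$$
Now suppose $G$ has Property $(**)$, i.e.\ $\ann(\ha(G,\R)) \subseteq c(H^2_b(G,\R)) = \ann(N_2(G,\R))$. Combined with the reverse inclusion we get $\ann(\ha(G,\R)) = \ann(N_2(G,\R))$; since $H_2(G,\R)$ is finite-dimensional, a subspace is determined by its annihilator, so $\ha(G,\R) = N_2(G,\R)$. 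Conversely, if $\ha(G,\R) = N_2(G,\R)$, then $\ann(\ha(G,\R)) = \ann(N_2(G,\R)) = c(H^2_b(G,\R))$, which is precisely Property $(**)$.

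The main obstacle I expect is establishing the identity $c(H^2_b(G,\R)) = \ann(N_2(G,\R))$ cleanly, i.e.\ the $\ell^1$--$\ell^\infty$ duality step; one must be careful that the relevant duality is between $H^n_b(G,\R)$ and the reduced $\ell^1$-homology (or equivalently, that one is pairing against the quotient by the closure of the subspace of null-norm classes), and the finite-dimensionality hypothesis is exactly what lets one avoid issues with non-closed subspaces and identify $N_2(G,\R)$ with the obstruction to boundedness. All the rest is elementary finite-dimensional linear algebra about subspaces and their annihilators. For the parenthetical claim that finitely presented groups have finite-dimensional $H_2(G,\R)$, one uses a presentation $2$-complex (or, more precisely, that $H_2$ of a group is a quotient of $H_2$ of any aspherical-in-degree-$2$ model, whose chain groups in degree $2$ are finitely generated).
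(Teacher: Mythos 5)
Your proof is correct and follows essentially the same route as the paper: both hinge on the identification of $c\bigl(H^2_b(G,\R)\bigr)$ with $\ann\bigl(N_2(G,\R)\bigr)$ via the $\ell^1$--$\ell^\infty$ duality (the paper cites this precise point from Barge--Ghys, you cite the same circle of ideas), together with the inclusion $\ha(G,\R)\subseteq N_2(G,\R)$ and finite-dimensional linear algebra about annihilators. The paper unfolds this into two cases (constructing an explicit witness via the Universal Coefficient Theorem when $N_2\ne\ha$), whereas you state the identity $c(H^2_b)=\ann(N_2)$ once and reduce everything to double-annihilator bookkeeping, but this is an organizational difference rather than a genuinely distinct argument.
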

 \begin{proof}
 Suppose first that $N_2(G,\R)=\ha(G,\R)$, and take
 $\alpha\in \ann(\ha(G,\R))=\ann(N_2(G,\R))$. Then $\alpha$ defines a linear map $H_2(G,\R)/N_2(G,\R)\to\R$. Since $H_2(G,\R)$ is finite dimensional, this map is continuous
 with respect to the quotient $\ell^1$-norm on $H_2(G,\R)/N_2(G,\R)$.
 By~\cite[Proposition 1.1]{BarGhys}, this implies that $\alpha$ may be represented by a bounded cocycle. Thus $G$ satisfies $(**)$.
 
 Suppose now that $N_2(G,\R)\neq \ha(G,\R)$, and take an element $\beta\in N_2(G,\R)\setminus \ha(G,\R)$. By the Universal Coefficient Theorem,
 we may construct an element $\alpha\in \ann(\ha(G,\R))$ such that $\langle \alpha,\beta\rangle=1$. Since $\|\beta\|_1=0$, the class $\alpha$ cannot be represented by any bounded cocycle. Thus $G$ does not satisfy $(**)$.  
   \end{proof}

\begin{thm}\label{3manifolds}
 Let $G$ be the fundamental group of a compact orientable $3$-manifold. Then $G$ satisfies Property QITB.
 \end{thm}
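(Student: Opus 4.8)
The plan is to reduce, via the prime decomposition and Corollary~\ref{free:cor}, to the case of a compact orientable \emph{aspherical} $3$-manifold, and then to apply the criterion of Proposition~\ref{criterio1} together with the behaviour of the simplicial seminorm under the JSJ decomposition.

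First I would invoke the Kneser--Milnor prime decomposition to write $G=\pi_1(M)$ as a free product $\pi_1(M_1)\ast\dots\ast\pi_1(M_k)$ of fundamental groups of prime compact orientable $3$-manifolds, so that by iterating Corollary~\ref{free:cor} it suffices to treat a prime $M$. If $M\cong S^1\times S^2$ then $\pi_1(M)\cong\Z$ is amenable and we conclude by Corollary~\ref{cor:amenable_QITB}; otherwise $M$ is irreducible. If $\pi_1(M)$ is finite it is again amenable; and if $\pi_1(M)$ is infinite then, by the Sphere Theorem together with the fact that an open simply connected $3$-manifold with vanishing $\pi_2$ is contractible, $M$ is aspherical. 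So we may assume that $M$ is compact orientable and aspherical; in particular $M$ is a finite $K(G,1)$, hence $G$ is finitely presented, $H_2(G,\R)\cong H_2(M,\R)$ is finite dimensional, and the simplicial seminorm on $H_2(G,\R)$ agrees with the one on $H_2(M,\R)$.

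By Proposition~\ref{criterio1}, $G$ then has Property $(**)$---hence, by Corollary~\ref{**QITB}, Property QITB---as soon as $N_2(G,\R)=\ha(G,\R)$. The inclusion $\ha(G,\R)\subseteq N_2(G,\R)$ always holds, and the fundamental class of an embedded $\pi_1$-injective torus $T\subseteq M$ is an amenable class, since it is the image of the generator of $H_2(\Z^2,\R)$ under the map induced by the inclusion $\pi_1(T)=\Z^2\hookrightarrow G$ and $\Z^2$ is amenable. Hence it is enough to show that every class $\alpha\in H_2(M,\R)$ with vanishing simplicial seminorm is a linear combination of classes of embedded tori. Here I would analyse $H_2(M,\R)$ by means of the Mayer--Vietoris sequence attached to the JSJ decomposition of $M$; the few geometric cases (in which the JSJ is trivial, so $M$ is Seifert fibered, hyperbolic, or amenable) are either directly amenable, or have $H_2$ spanned by vertical tori, or are handled by Proposition~\ref{direct:prop} (the case $\Sigma\times S^1$ and its orbifold analogues), or satisfy $N_2(G,\R)=0$ (the closed hyperbolic case). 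In general, the part of $H_2(M,\R)$ coming from the JSJ pieces is spanned by the classes of vertical tori inside the Seifert pieces, hence lies in $\ha(G,\R)$; and any nonzero class in the complementary part has \emph{positive} simplicial seminorm, being detected either by an essential surface of negative Euler characteristic inside a hyperbolic JSJ piece (whose simplicial seminorm is positive, by Gromov's work), or---after collapsing the Seifert fibrations---by the fundamental class of a closed hyperbolic $2$-orbifold, which has positive simplicial volume. Consequently $N_2(G,\R)$ coincides with the span of the toral classes, so $N_2(G,\R)=\ha(G,\R)$, and the proof is complete.

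The main obstacle is precisely this last geometric step: the control of the simplicial seminorm on $H_2(M,\R)$ under the JSJ decomposition---its vanishing on the contributions of the Seifert pieces (in the spirit of Yano's vanishing theorem) together with the positivity statements for the hyperbolic pieces and for the collapsing maps. Equivalently, one needs that on $H_2$ of a compact orientable $3$-manifold the simplicial seminorm vanishes only on classes representable by tori; everything else---the prime decomposition, the asphericity of irreducible $3$-manifolds with infinite $\pi_1$, and the recognition of torus classes as amenable classes---is routine.
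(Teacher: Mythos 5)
Your outline matches the paper's up to the crucial geometric step, and there you leave a genuine gap. Like the paper, you reduce by Kneser--Milnor and Corollary~\ref{free:cor} to a prime piece, dispose of $S^1\times S^2$ and finite $\pi_1$ by amenability, pass to the aspherical irreducible case, and apply Proposition~\ref{criterio1}; at that point everything hinges on proving $N_2(G,\R)\subseteq\ha(G,\R)$, i.e.\ that a class in $H_2(M,\R)$ with vanishing Gromov seminorm is represented by a union of tori.

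This is exactly the step you concede you cannot complete, and your proposed JSJ/Mayer--Vietoris route does not obviously close it: one would have to show that the Gromov seminorm in degree $2$ is additive (or at least non-degenerate) across the JSJ tori, and that every class supported in a hyperbolic piece with negative-Euler-characteristic support has strictly positive seminorm. That is essentially a rederivation of the theorem you are missing, not a shortcut around it. The paper instead invokes Gabai's theorem (\cite[Corollary 6.18]{Gabai}), which for a compact orientable irreducible $3$-manifold identifies the Gromov $\ell^1$-seminorm on $H_2(M,\R)$ with (a multiple of) the Thurston norm, so that $\|\beta\|_1=0$ forces the Thurston norm of $\beta$ to vanish; Thurston's theory then represents $\beta$ by a finite union of spheres and tori, hence tori in the aspherical case, and these are amenable classes. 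Without Gabai's theorem (or an equivalent input), your argument stops short of the conclusion; with it, the JSJ machinery you introduce is unnecessary.

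One minor additional remark: you appeal to the Sphere Theorem to conclude asphericity for irreducible $M$ with infinite $\pi_1$; this is fine, but for manifolds with compressible boundary one should note, as a sanity check, that such $\pi_1$ are free and then $H_2(G,\R)=0$, so the statement is vacuous there. The paper's phrasing sidesteps this by working directly with the class $\beta$.
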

\begin{proof}
Let $M$ be a compact orientable $3$-manifold. The decomposition of $M$ into prime summands $M_1,\dots,M_k$ decomposes $G$ as the free
product of the groups $G_i=\pi_1(M_i)$, $i=1,\dots,k$. By Corollary~\ref{free:cor}, we may thus assume that $M$ is prime. Moreover, if $M\cong S^2\times S^1$, then
$\pi_1(M)=\Z$ obviously satisfies Property QITB, hence we are reduced to study the case when $M$ is irreducible. We will show that, under this assumption, we have
that $G=\pi_1(M)$ satisfies (**), hence Property QITB.

Being the fundamental group of a compact manifold, $G$ is finitely presented, hence by Proposition~\ref{criterio1} it suffices to show that $N_2(G,\R)=\ha(G,\R)$. Recall that
the inclusion $\ha(G,\R)\subseteq N_2(G,\R)$ always holds, and take an element $\beta\in N_2(G,\R)$. If $G$ is finite, then of course $\beta \in \ha(G,\R)$. 
Since irreducible $3$-manifolds with infinite fundamental groups are aspherical, we may thus identify $H_2(G,\R)$ with $H_2(M,\R)$. 
The module $H_2(M,\R)$ is itself endowed with an $\ell^1$-seminorm (see e.g.~\cite{Gromov}), and the identification $H_2(G,\R)\cong H_2(M,\R)$ is isometric,
hence we may consider $\beta$ as an element of $H_2(M,\R)$ with vanishing seminorm. A result of Gabai~\cite[Corollary 6.18]{Gabai} now ensures
that, since $\|\beta\|_1=0$, also the Thurston norm of $\beta$ vanishes. Therefore, as an element of $H_2(M,\R)$, the class
$\beta$ is represented by a finite union of spheres and tori (in fact, since $M$ is aspherical, by a finite union of tori)~\cite{Thurston:norm}.
This immediately implies that $\beta\in \ha(G,\R)$, whence the conclusion.


\end{proof}

\bibliography{biblionote}
\bibliographystyle{alpha}

\end{document}